\documentclass[11pt, a4paper]{article}
\usepackage{t1enc}
\usepackage[latin1]{inputenc}
\usepackage[english]{babel}
\usepackage{amsmath,amsthm}
\numberwithin{equation}{section}
\usepackage{amsfonts}
\usepackage{latexsym}
\usepackage{graphicx}
\usepackage{float}
\usepackage[natural]{xcolor}
\usepackage{algorithm}
\usepackage{algorithmic}
\usepackage{enumerate,enumitem}
\usepackage{multirow}
\usepackage[colorlinks,linkcolor=blue]{hyperref}
\usepackage{lineno}
\usepackage{setspace}
\usepackage{fullpage}
\usepackage[title]{appendix}
\usepackage{amssymb}
\usepackage{listings}
\usepackage{bbm}

\usepackage[margin=2.3cm]{geometry}
\usepackage{enumitem,verbatim}

\newtheorem{theorem}{Theorem}[section]
\newtheorem*{thm-non}{Theorem}

\newtheorem{proposition}[theorem]{Proposition}
\newtheorem{lemma}[theorem]{Lemma}
\newtheorem{corollary}[theorem]{Corollary}

\newtheorem{problem}[theorem]{Problem}
\theoremstyle{definition}

\newtheorem*{defn-non}{Definition}

\title{%
Sim-Width, Induced Matching Treewidth, and Tree-Independence Number in Induced $K_{t,t}$-Free Graphs
\vspace{-6pt}
}

\author{%
Mengyuan Niu\textsuperscript{1,2},\quad
Xiumei Wang\textsuperscript{1}\thanks{%
Corresponding author: Xiumei Wang.
E-mail: wangxiumei@zzu.edu.cn.}\\[10pt]
{\small \textsuperscript{1}School of Mathematics and Statistics,
Zhengzhou University, Zhengzhou 450001, China}\\
{\small
\textsuperscript{2}Institute for Basic Science, 55 Expo-ro,
Yuseong-gu, Daejeon 34126, Republic of Korea}
}
\date{}\makeatother

\begin{document}
\maketitle
\begin{abstract}
The tree-independence number $tree\text{-}\alpha(G)$, the induced matching treewidth $tree\text{-}\mu(G)$, and the sim-width $simw(G)$ are graph parameters defined in terms of tree or branch decompositions. We establish two polynomial bounds for the tree-independence number of induced $K_{t,t}$-free graphs, one in terms of sim-width and the other in terms of induced matching treewidth.

Abrishami et al. (SIDMA, 2025) and Brettell et al. (EJC, 2025) asked whether bounded sim-width, together with the exclusion of an induced $K_{t,t}$, implies bounded tree-independence number. 
We answer this question by proving that, for integers $t\geq 2$ and $s\geq 1$, every induced $K_{t,t}$-free graph $G$ with $simw(G)\leq s$ satisfies $tree\text{-}\alpha(G)=O_t\left((s+1)^{2t^2-2t}\right)$. 
This also proves a polynomial strengthening of a conjecture of Be\v{s}ter \v{S}torgel et al. (arXiv, 2026)
concerning induced $K_{1,t}$-free graphs and improves a theorem of Alon et al. (arXiv, 2025) by reducing the exponent from $3t^2+1$ to $2t^2-2t$.

Alon et al. (arXiv, 2025) asked whether, for fixed induced matching treewidth, the tree-independence number is polynomially bounded in $t$. Using a VC-dimension argument, we answer this question affirmatively by showing that, for integers $\mu\geq 1$ and $t\geq 2$, every induced $K_{t,t}$-free graph $G$ with $tree\text{-}\mu(G)\leq\mu$ satisfies $tree\text{-}\alpha(G)=t^{O_\mu(1)}$.
\end{abstract}
\maketitle
{\bf Keywords:} tree-independence number, sim-width, induced matching
treewidth \\[6pt]
\noindent{
\textbf{2020 Mathematics Subject Classification.}
05C75, 05C69, 05C70.
}\par\medskip

\section{Introduction}\label{sec:intro}
For standard notation and
terminology, we refer the reader to Section~\ref{s2}.
Tree decompositions form one of the fundamental local-to-global frameworks in structural and algorithmic graph theory. The notion now known as treewidth has roots in several lines of research, including nonserial dynamic programming \cite{BB72}, Halin's $S$-functions \cite{Halin76}, the Graph Minors project \cite{RS84}, and partial $k$-trees \cite{ACP87}. Robertson and Seymour \cite{RS84} were the first to use the term treewidth.
A tree decomposition represents a graph by overlapping vertex sets, called bags, organized in a tree structure, so that local information on the bags can yield global properties of the graph.
The treewidth of a graph  is the minimum, taken over all its tree decompositions, of the size of the largest bag minus one.
It plays a fundamental role in graph minor theory \cite{Lovasz06}, decomposition-based algorithms \cite{ALS91,CyganEtAl15}, and logical meta-theorems \cite{Courcelle90}.
Its applicability to dense graphs is inherently limited, as bounded treewidth imposes sparsity properties; for example, $tw(K_n)=n-1$.

The tree-independence number, introduced independently by  Yolov \cite{Yolov18} and  Dallard et al. \cite{DMS24}, retains the tree-decomposition framework but measures each bag by its independence number.
It is known that $tree\text{-}\alpha(G)\leq tw(G)+1$.
Unlike treewidth,  the tree-independence number can remain bounded even on dense graphs containing arbitrarily large cliques.
In particular, graphs of tree-independence number one are precisely the chordal graphs \cite{DMS24}, while chordal graphs may contain arbitrarily large cliques.
For example,  $tree\text{-}\alpha(K_n)=1$.
This parameter has consequently become the subject of an active structural
and algorithmic research program,
including the study of forbidden induced
subgraphs \cite{AACHSV24,CHLS26,DMS24III}, the relationship between
treewidth and clique number \cite{DMS24,DMS24III}, and algorithms for
finding and exploiting tree decompositions with bounded independence number
\cite{DFGKM26}.

Sim-width and induced matching treewidth weaken this form of control by using induced matchings, but they belong to two different decomposition frameworks.
Sim-width, introduced by Kang et al. \cite{KKST17}, is defined via branch decompositions. It bounds the size of an induced matching of the whole graph that crosses each cut induced by the decomposition.
Sim-width provides a complementary cut-based description of graph structure, and its relationship with other width parameters has been studied in several recent works \cite{BKR23,BMPY25}.
Induced matching treewidth originates in Yolov's \cite{Yolov18} minor-matching hypertree width, while the graph-theoretic terminology and notation used here follow Lima et al. \cite{LMMORS26}.
It minimizes, over all tree decompositions, the largest induced matching whose every edge has an end in a common bag. Despite being weaker than tree-independence number, bounded induced matching treewidth still retains significant algorithmic consequences \cite{BFK26,LMMORS26,Yolov18}.

For a graph $G$,
$$simw(G)\leq tree\text{-}\mu(G)\leq tree\text{-}\alpha(G).$$
where the first inequality is due to Bergougnoux et al. \cite{BKR23}, and the second follows directly from the definitions.
In contrast, tree-independence number cannot be bounded in terms of either sim-width or induced matching treewidth: $K_{n,n}$ has sim-width at most one and induced matching treewidth one, but tree-independence number $n$.
Large induced bicliques, that is, large induced complete bipartite graphs,  constitute a fundamental obstruction to translating induced-matching control into bounded independence number within the bags. This observation motivates recent work on tree-independence number in graphs excluding  induced bicliques \cite{ABCMMRW25,AMR25,BMPY25} and makes induced \(K_{t,t}\)-free graphs a natural setting for the problems studied in this paper.

The first problem concerns sim-width. Abrishami et al. {\cite{ABCMMRW25}}
proved that, after excluding a fixed induced biclique, bounded induced matching
treewidth implies bounded tree-independence number.
Alon et al. {\cite{AMR25}}  obtained the
polynomial estimate $tree\text{-}\alpha(G)=O_t\!\left(\mu^{3t^2+1}\right)$
for induced $K_{t,t}$-free graphs with $tree\text{-}\mu(G)\leq\mu$.
Since $simw(G)
\leq tree\text{-}\mu(G)$,
it is natural to ask whether induced matching treewidth in these results can be replaced by the potentially smaller parameter sim-width.
This problem was posed by Brettell et al. {\cite{BMPY25}}, and was later restated in an equivalent form by Abrishami et al. {\cite{ABCMMRW25}}.

\begin{problem}[Abrishami et al.\
{\cite{ABCMMRW25}}; Brettell et al.
{\cite{BMPY25}}]\label{prob:simwidth}
Do graph classes of bounded sim-width that exclude some fixed biclique as an
induced subgraph have bounded tree-independence number?
\end{problem}

Our first theorem gives a polynomial bound that answers Problem \ref{prob:simwidth}.

\begin{theorem}\label{thm:main}
For every integer $t\geq 2$, there exists a constant $c_t>0$ such
that, for every integer $s\geq 1$, every induced $K_{t,t}$-free
graph $G$ with $simw(G)\leq s$ satisfies
$tree\text{-}\alpha(G)
\leq c_t(s+1)^{2t^2-2t}.$
Equivalently,
$tree\text{-}\alpha(G)
=O_t\left(
\bigl(s+1\bigr)^{2t^2-2t}
\right)$.
\end{theorem}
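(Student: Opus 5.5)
We will derive the tree-independence bound from a balanced-separator argument. The standard route, due to Dallard et al., runs as follows. Suppose that for every weight function (equivalently, every vertex subset $W$), $G$ has a balanced separator $X$ with $\alpha(X)\le k$. Then $tree\text{-}\alpha(G)=O(k)$. So it suffices to show that every induced $K_{t,t}$-free graph of sim-width at most $s$ has balanced separators of independence number $O_t((s+1)^{2t^2-2t})$.

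\textbf{Step 1: a cut from the branch decomposition.} Fix a branch decomposition $(T,L)$ of sim-width at most $s$. Fix a weight or a set $W$ with $|W|$ large, or more concretely a large independent set $I$ that we wish to split. The standard centroid argument gives an edge $e$ of $T$ whose cut $(A,B)$ is balanced with respect to $W$, with each side holding between $1/3$ and $2/3$ of the weight. Every induced matching of $G$ crossing $(A,B)$ has size at most $s$.

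\textbf{Step 2: converting the cut into a separator.} A natural separator is a set $X$ covering all edges between $A$ and $B$, for instance $X=N(B)\cap A$ or a mixture of both sides. We must bound $\alpha(X)$. Suppose $X$ contains an independent set $J$ of size $m$. Each $x\in J$ has a neighbour $y_x$ across the cut. Consider the bipartite "cross" graph $H$ on $J\cup\{y_x\}$.

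An induced matching of size $s+1$ in $G$ crossing the cut is forbidden, so $H$ has no such matching. This does not quite suffice. We also need to control edges inside the $y$-side, since sim-width counts induced matchings in $G$, not in the bipartite cut graph. This is exactly the difficulty that separates sim-width from mim-width.

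The key Ramsey step is to apply Ramsey's theorem to the $y_x$'s, obtaining either a large clique or a large independent set among them. On an independent $Y'$ we use a bipartite Ramsey or Kővári–Sós–Turán type argument between $J'$ and $Y'$. There are three outcomes:
\begin{itemize}
\item an induced matching of size $s+1$, which is forbidden;
\item an induced $K_{t,t}$, which is forbidden;
\item a structure such as a large clique on one side with a common neighbourhood pattern, which we handle separately.
\end{itemize}
The quantitative target is a lemma of the form: any independent set in $X$ has size at most $c_t(s+1)^{2t^2-2t}$. The exponent $2t^2-2t=2t(t-1)$ suggests that two applications of a Kővári–Sós–Turán-type bound are used, each costing exponent roughly $t(t-1)$. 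A typical instance is a Zarankiewicz-type bound for $K_{t,t}$-free bipartite graphs that contain no induced matching of size $s+1$. That bound gives polynomially many vertices per "matching-free" configuration, in contrast with the exponential Ramsey bounds.

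\textbf{Step 3: the clique case.} When the $y_x$'s form a clique, the crossing induced matching may fail to be induced in $G$, and this is the main obstacle. Our first approach is to choose the separator asymmetrically: take $X$ to consist of those vertices on one side with neighbours on the other side, but pick from each side only the vertices whose neighbourhoods are "independent-rich". Alternatively, we replace each $y_x$ by a vertex chosen from an independent set across the cut, using the balance of $I$ to guarantee that there are many independent vertices available on both sides. We would then apply the polynomial bipartite lemma twice, once per side of the cut, which yields the exponent $2(t^2-t)$.

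Finally, we combine the resulting bound on $\alpha(X)$ with the balanced-separator-to-tree-independence theorem to obtain $tree\text{-}\alpha(G)\le c_t(s+1)^{2t^2-2t}$.
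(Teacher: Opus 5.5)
\paragraph{Verdict.} Your outer framework is fine, but the proposal has a genuine gap: the cut-cover lemma that carries the whole proof is stated as a target, not proved, and the Step~2 set-up cannot prove it.

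\paragraph{What works.} A balanced edge of the branch decomposition gives a cut $(A,B)$ with $sim_G(A,B)\le s$. Any crossing-edge cover of that cut is a balanced separator, and balanced-separator lemmas for tree-$\alpha$ of the kind you invoke exist. This is a legitimate substitute for the paper's route, which uses strong brambles, the Helly property for the subtrees $T_B$ of the branch-decomposition tree, and the duality of Lemma~\ref{thm:bramble-duality}.

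\paragraph{The gap.} Everything rests on what you call a ``quantitative target'': every cut of sim-value at most $s$ in an induced $K_{t,t}$-free graph has a crossing-edge cover $C$ with $\alpha(G[C])=O_t((s+1)^{2t^2-2t})$. Steps~2--3 do not prove this. Step~2 tries to bound independent sets inside a fixed one-sided cover such as $X=N(B)\cap A$, and that cannot succeed. The ``clique case'' you defer is not a technical subcase; it is a counterexample.
\begin{itemize}
\item Let $A=\{a_1,\dots,a_n\}$ be independent, $B=\{b_1,\dots,b_n\}$ a clique, and add the edges $a_ib_i$. This graph is chordal with no induced $C_4$, so it is induced $K_{t,t}$-free for all $t\ge 2$. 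Also $sim_G(A,B)=1$, yet $\alpha(G[N(B)\cap A])=n$.
\item Take the disjoint union with the mirror image (a clique on the $A$-side, an independent set on the $B$-side). The resulting cut has sim-value $2$, and neither $N(B)\cap A$ nor $N(A)\cap B$ has bounded independence number. The mixed cover consisting of the two cliques has $\alpha=2$.
\item Your partners $y_x$ also need not be distinct. A star across the cut is another configuration that must be covered from the opposite side.
\end{itemize}
So the cover must be chosen edge by edge, and these choices interact globally. Your ``independent-rich'' selection supplies no such rule. The balance of $I$ is irrelevant to $\alpha(X)$. Reading the exponent as two K\H{o}v\'ari--S\'os--Tur\'an bounds is guesswork rather than an argument.

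\paragraph{The missing idea.} This is Proposition~\ref{thm:cut-cover-intro}.
\begin{enumerate}
\item Treat the crossing edges $F$ as the vertices of two auxiliary graphs. In $H_A$, $ef$ is an edge when $a_e=a_f$ or $a_ea_f\in E(G)$; $H_B$ is defined in the same way on the $B$-ends.
\item Both graphs are induced $K_{t,t}$-free, because an induced $K_{t,t}$ in $H_A$ forces $2t$ distinct $A$-ends inducing $K_{t,t}$ in $G$.
\item An independent set of $H_A\cup H_B$ is a matching whose two end-sets are independent in $G$. So it spans an induced bipartite subgraph with a perfect matching. The extraction lemma of Alon et al.\ (Lemma~\ref{extractinducedmatching}) then gives $\alpha(H_A\cup H_B)<\mathcal M(s,t)=O_t(s^t)$.
\item A polynomial version of the Hajebi--Spirkl partition lemma (Lemma~\ref{prop:poly-selector}, with $p_t(r)=O_t((r+1)^{2t-2})$) splits $F=F_1\cup F_2$ so that $\alpha(H_A[F_1])$ and $\alpha(H_B[F_2])$ are at most $p_t(\mathcal M(s,t)-1)$.
\item The set $C=\{a_e:e\in F_1\}\cup\{b_e:e\in F_2\}$ is then a crossing-edge cover with $\alpha(G[C])\le 2p_t(\mathcal M(s,t)-1)$.
\end{enumerate}
The exponent is $t\cdot(2t-2)$: degree $t$ from matching extraction, composed with degree $2t-2$ from the partition lemma. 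Without this, or some equivalent mechanism for deciding which end covers each crossing edge, your argument gives no bound on the separator at all, polynomial or otherwise.
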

The case $t=1$ is trivial: induced $K_{1,1}$-free graph is edgeless and has tree-independence number at most one.
Together with $simw(G)\leq tree\text{-}\mu(G)
\leq tree\text{-}\alpha(G)$,
Theorem \ref{thm:main} shows that, for every fixed integer $t\geq 2$, sim-width,
induced matching treewidth, and tree-independence number are polynomially
related on the class of induced $K_{t,t}$-free graphs.
Since $simw(G)\leq tree\text{-}\mu(G)$ and we have $tree\text{-}\alpha(G)=O_t\left(
\bigl(\mu+1\bigr)^{2t^2-2t}\right)=O_t\left(\mu^{2t^2-2t}\right)$, for every fixed $t$, Theorem \ref{thm:main} improves the exponent of induced matching treewidth obtained by  Alon et al. \cite[Theorem~1.2]{AMR25} from $3t^{2}+1$ to $2t^{2}-2t$.
Theorem \ref{thm:main} also gives a polynomial strengthening of a recent conjecture of Be\v{s}ter \v{S}torgel et al. \cite{BCKV26},  which asks whether $K_{1,d}$-free graph classes of bounded sim-width have bounded tree-independence number.
Indeed, every induced copy of $K_{t,t}$ contains an induced copy of $K_{1,t}$.
Hence, every induced $K_{1,t}$-free graph is induced $K_{t,t}$-free. Therefore, this conjecture is a direct consequence of Theorem \ref{thm:main}.

We now turn to the complementary case in which induced matching treewidth is fixed and the dependence on $t$ is considered.
Alon et al. \cite{AMR25} proved that there is no bivariate
polynomial $P$ such that $tree\text{-}\alpha(G) \leq P(tree\text{-}\mu(G),t)$ for every induced $K_{t,t}$-free graph $G$.
They asked whether polynomial dependence on $t$ is possible when induced matching treewidth is fixed.

\begin{problem}[Alon-Milani\v{c}-Rz\k{a}\.zewski
{\cite{AMR25}}]\label{prob:fixed-mu}
For every positive integer $\mu$, does there exist a polynomial
$p_\mu(t)$ such that, for every positive integer $t$, every
induced $K_{t,t}$-free graph $G$ with $tree\text{-}\mu(G)\leq\mu$ satisfies
$tree\text{-}\alpha(G)\leq p_\mu(t)$?
\end{problem}
Theorem \ref{thm:mu-main} answers Problem \ref{prob:fixed-mu} affirmatively.

\begin{theorem}\label{thm:mu-main}
For every integer $\mu \ge 1$, there exist positive constants
$C_{\mu}$ and $c_{\mu}$ such that, for every integer $t\ge 2$,
every induced $K_{t,t}$-free graph $G$ with
$tree\text{-}\mu(G)\le \mu$ satisfies
$tree\text{-}\alpha(G)
\le C_{\mu}t^{c_{\mu}}$.
Equivalently,
$tree\text{-}\alpha(G)=t^{O_{\mu}(1)}.$
\end{theorem}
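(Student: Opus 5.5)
The plan is to follow the standard route for bounding tree-independence number via balanced separators. By known results (Dallard et al., and the ``separator'' characterization of tree-$\alpha$), it suffices to show the following. For every induced $K_{t,t}$-free graph $G$ with $tree\text{-}\mu(G)\le\mu$ and every weight function (or every vertex set $W$), there is a balanced separator $X$ with $\alpha(X)\le t^{O_\mu(1)}$. Such a separator for every $W$ yields a tree decomposition whose bags have independence number at most a constant multiple of that bound. Fix a tree decomposition $\mathcal T$ of $G$ with every bag having induced matching number at most $\mu$ (in the sense of the definition). A standard centroid argument gives a bag $B$ that is a balanced separator. The task then reduces to replacing $B$, whose independence number may be huge, by a separator of polynomially bounded independence number, or to covering $B$ by the neighbourhoods of few vertices.

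The key structural step is a VC-dimension argument. Consider a large independent set $I\subseteq B$. Since $B$ has no large induced matching touching it, the bipartite ``neighbourhood'' structure between $I$ and the rest of $G$ is constrained. I would define the set system $\mathcal F=\{N(v)\cap I: v\in V(G)\}$ on ground set $I$. The aim is to show that this system has VC-dimension bounded by a function of $t$ and $\mu$, and in fact $O(\mu\log t)$ or $O_\mu(1)\cdot\log t$-type. Suppose $\mathcal F$ shatters a set $S\subseteq I$ with $|S|=d$. Each subset of $S$ is then realized by some vertex. A Ramsey-type extraction, using the subsets of size $t$ together with $K_{t,t}$-freeness and the absence of induced matchings of size $\mu+1$ meeting $B$, should bound $d$. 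Roughly, vertices realizing singletons $\{s\}$ give a matching onto $S$; either $\mu+1$ of these form an induced matching, or the realizers are densely adjacent among themselves, or they have many cross adjacencies. Pushing this with bounded Ramsey numbers for fixed $\mu$ gives an induced $K_{t,t}$ inside $S$ together with its realizers. Note that $S$ is independent, and the realizers of large subsets give the other side, with care taken to make that side independent. This needs Ramsey numbers polynomial in $t$ for fixed $\mu$, which is plausible since only $\mu+1$ is the non-$t$ parameter.

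With bounded VC-dimension $d=O_\mu(\log t)$ or $O_\mu(1)$, I would apply the $\varepsilon$-net theorem. This shows that the ``heavy'' part of $B$ can be dominated by a small set $D$: every vertex of a large independent set in $B$ is adjacent or equal to a vertex of $D$, where $|D|=O(d\,\varepsilon^{-1}\log\varepsilon^{-1})$ with $\varepsilon$ chosen polynomially in $t$. Then $N[D]$ is used to build the separator. Since $G$ is induced $K_{t,t}$-free with bounded $tree\text{-}\mu$, the neighbourhood $N(v)$ of any single vertex should have independence number bounded polynomially in $t$ for fixed $\mu$, by a similar Ramsey argument on $N(v)$ and its bag. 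Combining these gives a separator of independence number $t^{O_\mu(1)}$.

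I expect the main obstacle to be the VC-dimension/Ramsey step. The difficulty is extracting an induced $K_{t,t}$ with only polynomial loss in $t$, rather than tower-type loss. I would first try to bound the dual shatter function directly by $t^{O_\mu(1)}$ through an iterated application of the Erd\H{o}s--Szekeres-style bound $R(\mu+1,t)\le t^{\mu}$. This would be used to alternately kill induced matchings and cliques among realizers, since cliques among realizers can be tolerated after taking one side from $S$.
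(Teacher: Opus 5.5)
Your reduction to balanced separators is not where the problem lies. The problem is the step that carries all the content, replacing the centroid bag $B$ by a separator of small independence number, which rests on two claims that are false.

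\textbf{First false claim.} The trace system $\{N(v)\cap I : v\in V(G)\}$ on an independent set $I\subseteq B$ need not have VC-dimension bounded in $\mu$ and $t$. Take an independent set $I$ with $|I|=d$ and a clique $K=\{v_T : T\subseteq I\}$ with $N(v_T)\cap I=T$.
\begin{itemize}
\item Each side of an induced biclique is independent, so it contains at most one vertex of $K$. For $t\ge 2$, both sides therefore contain a vertex of $I$, and these two vertices would have to be adjacent. Hence the graph is induced $K_{t,t}$-free.
\item Every edge has an end in $K$, so $im(G)=1$, and the one-bag decomposition has $\mu(\mathcal T)=1$.
\item Yet $I$ is shattered.
\end{itemize}
This is exactly where your Ramsey extraction stalls. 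Realizers that form a clique give neither an induced matching nor an induced $K_{t,t}$, and there is no way to ``tolerate'' them. A VC-dimension bound such as Lemma~\ref{lem:vc-from-im} is available only for bipartite graphs both of whose sides are independent sets.

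\textbf{Second false claim.} $\alpha(G[N(v)])$ is not bounded in terms of $\mu$ and $t$. The star $K_{1,n}$ is induced $K_{t,t}$-free with $im(K_{1,n})=1$, but its centre has a neighbourhood of independence number $n$. So building the separator from $N[D]$ for a small dominating set $D$ cannot work in general. High-degree hubs are exactly the vertices that must stay in a bag rather than be expanded into their neighbourhoods.

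\textbf{What is missing.} You need a way to manufacture bipartite graphs between two independent sets, together with a light/heavy split. The paper proceeds as follows.
\begin{itemize}
\item It fixes a maximum independent set $S$ of $G$. By Hall's theorem, any independent set in $\beta(x)\setminus S$ is matched into $S$. The resulting bipartite graph has both sides independent and induced matching number at most $\mu$.
\item Lemma~\ref{lem:vc-from-im} and Hons's theorem (Lemma~\ref{thm:hons}) then bound that matching by $\kappa_\mu(t)=O_\mu(t^{1+1/\beta_\mu})$. This uses the fact that co-matchings and half-graphs contain an induced $K_{t,t}$.
\item Vertices of $S$ with $\alpha(G[N(v)])<C(\mu,t)=t^\mu+\mu\kappa_\mu(t)$ are called light, and they are replaced in each bag by their neighbourhoods. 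This costs at most $\mu(C(\mu,t)-1)$, via a minimal-cover argument that produces induced matchings.
\item Heavy vertices of $S$ stay in the bags, and each bag contains at most $\mu(2\kappa_\mu(t)+1)$ of them. The ``bad pair'' graph is $2\kappa_\mu(t)$-degenerate. An independent set of size $\mu+1$ in it, combined with Lemma~\ref{lem:independent-transversal} applied to private neighbourhoods of size $t^\mu$, would yield an induced matching of size $\mu+1$ touching the bag.
\end{itemize}
The new tree decomposition is built directly from these bounds, with no separators or $\varepsilon$-nets.
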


For fixed $t$, Theorem~\ref{thm:main} gives a polynomial bound in $\operatorname{simw}(G)$.
Since $\operatorname{simw}(G)\le\operatorname{tree}\text{-}\mu(G)$,
it also gives the bound $c_t(\mu+1)^{2t^2-2t}$ when
$\operatorname{tree}\text{-}\mu(G)\le\mu$.
For fixed $\mu$, this bound contains a factor exponential
in $t^2$, whereas the bound $C_\mu t^{c_\mu}$ in
Theorem~\ref{thm:mu-main} is polynomial in $t$.
Both theorems  bound the tree-independence number, but their proofs are fundamentally different.
The proof of Theorem~\ref{thm:main} first uses crossing-edge covers to
bound the $\alpha$-order of every strong bramble, and then applies a result of Choi et al.~\cite{CHMW26} showing that a uniform upper bound on the $\alpha$-order of all strong brambles gives an upper bound on the tree-independence number.
The proof of Theorem~\ref{thm:mu-main} begins with a tree decomposition witnessing bounded induced matching treewidth and then transforms it into a new tree decomposition whose bags have bounded independence number.

The paper is organized as follows. Section~\ref{s2} introduces the notation and recalls the Helly property for subtrees, together with a result relating the tree-independence number to the $\alpha$-order of strong brambles.
Section~\ref{s3} proves the polynomial crossing-edge-cover theorem and applies it to  establish Theorem~\ref{thm:main}.
Section~\ref{s4} develops a matching-extraction argument under bounded VC-dimension and uses it  to prove Theorem~\ref{thm:mu-main}.
Section~\ref{s5} concludes the paper with some final remarks.

\section{Preliminaries}\label{s2}

All graphs considered in this paper are  finite and simple.  Let  $G$ be a graph.
 We denote by $\alpha(G)$ the independence number of $G$.
A \emph{matching} is a set of pairwise nonadjacent edges, and $\nu(G)$ denotes the maximum size of a matching in $G$.  An \emph{induced matching} in $G$ is a set $M\subseteq E(G)$ such that the subgraph induced by the ends of the edges in $M$ consists exactly of the edges of $M$.
Equivalently, $M$ is a matching  and there is no edge of $G$ joining ends of two distinct edges of $M$.
We write $im(G)$ for the maximum size of an induced matching in $G$.
For a set $X\subseteq V(G)$, let $\mu_{G}(X)$ denote the maximum size of an induced
matching $M$ in $G$ such that every edge of $M$ has at least one end in $X$.
We denote by $G[X]$ the subgraph of $G$ induced by $X$ and $N_G(X)$  the set
of vertices in $V(G)\setminus X$ having a neighbor in $X$, and set $N_G[X]=X\cup N_G(X)$.
If $X=\{v\}$, we write $N_G(v)$ and $N_G[v]$ for $N_G(X)$ and $N_G[X]$, respectively.
For a positive integer $k$, write $[k]=\{1,\ldots,k\}$.
For nonnegative integers $n$ and $k$, the binomial coefficient $\binom{n}{k}$ denotes the number of $k$-element subsets of an $n$-element set. In particular, $\binom{n}{k}=0$ if $k>n$.
For two graphs $G_1$ and $G_2$ on the same vertex set, $G_1\cup G_2$ denotes the graph with edge set $E(G_1)\cup E(G_2)$.

A \emph{tree decomposition} of $G$ is a pair $\mathcal{T}=(T,\beta)$, where $T$ is a tree
and $\beta\colon V(T)\to 2^{V(G)}$, satisfying:
\begin{enumerate}[label=(\roman*)]
  \item $\bigcup\limits_{x\in V(T)}\beta(x)=V(G)$;
  \item for every edge $uv\in E(G)$, there exists $x\in V(T)$ such that $\{u,v\}\subseteq \beta(x)$; 
  \item for every $v\in V(G)$, the set
  $\{x\in V(T):v\in\beta(x)\}$ induces a  subtree of $T$.
\end{enumerate}
Let $\mathcal{T}=(T,\beta)$ be  a tree decomposition of $G$.
For each $x\in V(T)$, the set $\beta(x)$ is called the \emph{bag} associated with $x$.
Define $\mu(\mathcal T)=\max\limits_{x\in V(T)}\mu_{G}(\beta(x))$.
The \emph{induced matching treewidth} of $G$, denoted by $tree\text{-}\mu(G)$, is defined as  $\min\limits_{\mathcal T}\mu(\mathcal T)$. 
The \emph{independence number} of $\mathcal{T}$, denoted by $\alpha(\mathcal T)$, is defined as $\max\limits_{x\in V(T)}\alpha(G[\beta(x)])$.
The \emph{tree-independence number} of  $G$, denoted by  \emph{tree$\text{-}\alpha$(G)}, is the minimum independence number of tree decompositions of $G$.

A \emph{branch decomposition} of $G$ is a pair $(T,\varphi)$, 
where $T$ is a tree in which every non-leaf vertex has degree three, and $\varphi:V(G)\to L(T)$ is a bijection from $V(G)$ to the leaf set $L(T)$ of $T$.
Every edge $e\in E(T)$ defines a cut $(A_e,B_e)$ of $G$: the two components of $T-e$
partition the leaves of $T$, and $A_e$ and $B_e$ are the corresponding vertex subsets of $G$.
For a cut $(A,B)$ of $G$, let  $E_G(A,B)=\{uv\in E(G):u\in A,\ v\in B\}$.
The \emph{sim-value} of a cut $(A,B)$ is defined by
$$sim_G(A,B):=\max\bigl\{|M|: M\subseteq E_G(A,B)\text{ is an induced matching in } G\bigr\}.$$
The \emph{sim-width} of a branch decomposition $(T,\varphi)$ is $\max\limits_{e\in E(T)} sim_G(A_e,B_e)$,
and the \emph{sim-width} of $G$, denoted by $simw(G)$, is the minimum sim-width over all branch decompositions of $G$.
For graphs with at most one vertex, we use the convention $simw(G)=0$.
For an edge set $F\subseteq E(G)$, a set $C\subseteq V(G)$ \emph{covers} $F$
if every edge in $F$ has at least one end in $C$.  In particular, a set
covering $E_G(A,B)$ is called a \emph{crossing-edge cover} of the cut $(A,B)$.

Let $\Omega$ be a finite set and let $\mathcal{F}\subseteq 2^\Omega$
be a set system. 
We call $S$ a \emph{transversal} of $\mathcal{F}$ if $|S\cap F|=1$ for every $F\in\mathcal{F}$. In addition,  if $S$ is an independent set in $G$, then we call $S$ an \emph{independent transversal} of $\mathcal{F}$.
For $S\subseteq \Omega$, 
write $\mathcal{F}|_S =\{F\cap S : F\in\mathcal{F}\}$.
We say that $S$ is \emph{shattered} by $\mathcal{F}$ if $\mathcal{F}|_S=2^S$;
equivalently, for every $X\subseteq S$, there exists an $F\in\mathcal{F}$ such that $F\cap S=X$.
The \emph{VC-dimension} of $\mathcal{F}$ is defined by
$$\operatorname{VC}(\mathcal{F}):= \max\bigl\{|S|: S\subseteq\Omega \text{ and } S \text{ is shattered by }\mathcal{F}
\bigr\}.$$
We use the convention that $\operatorname{VC}(\mathcal{F})=0$ if $\mathcal{F}$ shatters no nonempty subset of $\Omega$.
For a bipartite graph $G$ with bipartition $(U,V)$, let
$\mathcal{N}_G(U):=\{N_G(u):u\in U\}\subseteq 2^V$ and
$\mathcal{N}_G(V):=\{N_G(v):v\in V\}\subseteq 2^U$.
We define the VC-dimension of $G$ by
$$\operatorname{VC}(G):=\max\left\{\operatorname{VC}\bigl(\mathcal{N}_G(U)\bigr), \operatorname{VC}\bigl(\mathcal{N}_G(V)\bigr)\right\}.$$
Two distinct vertices in the same part of a bipartite graph are called
\emph{twins} if they have the same neighborhood in the opposite
part.

A \emph{strong bramble} in $G$ is a nonempty family $\mathcal{B}$ of pairwise intersecting nonempty vertex sets, each of which induces a connected subgraph of $G$.
A set
$X\subseteq V(G)$ \emph{covers} $\mathcal B$ if $X\cap B\neq\varnothing$ for
every $B\in\mathcal B$.  The \emph{$\alpha$-order} of $\mathcal B$ is
  $ord_{\alpha}(\mathcal B)
  :=\min\{\alpha(G[X]):X\subseteq V(G)\text{ covers }\mathcal B\}$.

The following lemma gives classical Helly property for subtrees of a tree.
\begin{lemma}[Helly property for subtrees \cite{Horn1972}]\label{HellyP}
A collection of subtrees of a tree has a common vertex if and only if
every two members of the collection have a common vertex.
\end{lemma}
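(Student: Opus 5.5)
The statement to prove is Lemma~\ref{HellyP}: a family of subtrees of a tree has a common vertex if and only if every two of them meet. The forward direction is trivial, since a vertex common to all members is common to any two. For the converse, I will first reduce to finite families. In this paper the tree $T$ is finite, so any collection of its subtrees is finite, and I argue by induction on the number $k$ of subtrees $T_1,\dots,T_k$. The cases $k\le 2$ hold by hypothesis.

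The key tool is the three-subtree case, proved through the path structure of trees. Suppose $T_1,T_2,T_3$ pairwise intersect, and choose
\[
a\in T_1\cap T_2,\qquad b\in T_2\cap T_3,\qquad c\in T_1\cap T_3 .
\]
In a tree the three paths $P_{ab}$, $P_{bc}$, $P_{ca}$ share a common vertex, namely the median $m$ of $a,b,c$. To see this, let $m$ be the vertex where $P_{ab}$ and $P_{ac}$ diverge, i.e.\ the last common vertex of these two paths starting from $a$. Then the concatenation of the segment of $P_{ab}$ from $m$ to $b$ with the segment of $P_{ac}$ from $m$ to $c$ is a path, hence it is $P_{bc}$, and it contains $m$.

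Since each subtree is connected, it contains the unique path between any two of its vertices. Thus $P_{ab}\subseteq T_2$, $P_{bc}\subseteq T_3$ and $P_{ca}\subseteq T_1$. The median $m$ lies on all three paths, so $m\in T_1\cap T_2\cap T_3$.

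For the induction step with $k\ge 3$, replace $T_{k-1}$ and $T_k$ by $T'=T_{k-1}\cap T_k$. This set is nonempty by hypothesis. It is also a subtree: it is connected because, for $u,v\in T'$, the unique $u$--$v$ path lies in both $T_{k-1}$ and $T_k$. For each $i<k-1$, the three subtrees $T_i,T_{k-1},T_k$ pairwise intersect, so the three-subtree case shows $T_i\cap T'\neq\varnothing$. Hence the $k-1$ subtrees $T_1,\dots,T_{k-2},T'$ pairwise intersect. By induction they have a common vertex, and that vertex lies in every $T_i$.

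The only step with real content is the existence of the median vertex, that is, that the three connecting paths meet. Its verification rests on uniqueness of paths in trees, as described above.
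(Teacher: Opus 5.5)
The paper does not prove this lemma. It quotes it as a classical fact from \cite{Horn1972} and uses it once, in the proof of Proposition~\ref{prop:transfer-intro}, for the finite family $\{T_B : B\in\mathcal B\}$ of subtrees of a finite branch-decomposition tree.

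Your argument is a correct, self-contained proof of exactly what is needed there. It has two parts:
\begin{enumerate}[label=(\roman*)]
\item The three-subtree case via the median of $a,b,c$. Each of $P_{ab}$, $P_{bc}$, $P_{ca}$ lies in the subtree containing both its ends, and all three paths pass through the median.
\item Induction on the number of members. The intersection of two intersecting subtrees is again a subtree, and by the three-subtree case it still meets every other member.
\end{enumerate}
Compared with the bare citation, your route shows where finiteness enters. For infinite families in infinite trees the statement fails: the tails $\{n,n+1,\dots\}$ of a one-way infinite path pairwise intersect but have no common vertex. This is harmless here, since $T$ is finite.

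One point in the median step is worth tightening. Define $m$ as the last vertex of $P_{ab}$, traversed from $a$, that lies on $P_{ac}$. Then the part of $P_{ab}$ after $m$ avoids $P_{ac}$ entirely, so the concatenation through $m$ has no repeated vertex and, by uniqueness of paths, is $P_{bc}$. If you instead take $m$ to be the end of the maximal common initial segment, you need one more appeal to acyclicity to rule out the two tails meeting again.
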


We also use the following result of Choi et al. \cite{CHMW26}, which relates
tree-independence number to the $\alpha$-order of strong brambles.

\begin{lemma}[Choi et al. \cite{CHMW26}]
\label{thm:bramble-duality}
Let $G$ be a graph and let $k\geq 1$ be an integer.  If
$tree\text{-}\alpha(G)\geq 4k-2$,
then $G$ has a strong bramble of $\alpha$-order at least $k$.
\end{lemma}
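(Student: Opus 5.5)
We argue by contraposition. Assume every strong bramble in $G$ has $\alpha$-order at most $k-1$; we build a tree decomposition whose bags all have independence number at most $4k-3$. The approach imitates the classical proof that brambles of small order yield tree decompositions of small width, with cardinality replaced by $\alpha$ throughout.

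\emph{Step 1 (balanced separators).} Let $S$ be an independent set of $G$. We claim there is a set $X\subseteq V(G)$ with $\alpha(G[X])\le k-1$ such that every component of $G-X$ contains at most $|S|/2$ vertices of $S$. Suppose this fails. Then for every $X$ with $\alpha(G[X])\le k-1$ there is a component $C_X$ of $G-X$ with $|C_X\cap S|>|S|/2$. This component is unique, since two such components would together contain more than $|S|$ vertices of $S$. Let $\mathcal B=\{C_X\}$.
\begin{itemize}
\item Each $C_X$ is nonempty and connected.
\item Any two members intersect, because each contains more than half of the same set $S$.
\end{itemize}
So $\mathcal B$ is a strong bramble. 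Let $Z$ be any set covering $\mathcal B$. If $\alpha(G[Z])\le k-1$, then $C_Z$ is disjoint from $Z$, which is a contradiction. Hence $\operatorname{ord}_\alpha(\mathcal B)\ge k$, contrary to our assumption. This explains the choice of an independent set $S$ as the measure: disjointness of two components then forces a contradiction by counting alone, whether or not the components are adjacent.

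\emph{Step 2 (recursive construction).} We prove the following statement by induction on $|V(H)|$. For every induced subgraph $H$ of $G$ and every $W\subseteq V(H)$ with $\alpha(G[W])\le 3k-2$, there is a tree decomposition of $H$ of independence number at most $4k-3$ that has a bag containing $W$.
\begin{itemize}
\item If $\alpha(G[W])<3k-2$ and $V(H)\ne W$, enlarge $W$ by one vertex. This keeps $\alpha(G[W])\le 3k-2$, so we may assume $\alpha(G[W])=3k-2$.
\item Take a maximum independent set $S\subseteq W$. Apply Step 1 inside $H$; this is legitimate because the bramble assumption is inherited by induced subgraphs. We obtain $X$ with $\alpha(G[X])\le k-1$.
\item Put a root bag $W\cup X$, whose independence number is at most $4k-3$.
\item For each component $C$ of $H-X$, recurse on $H[C\cup N_H(C)]$ with boundary $W_C=(W\cap C)\cup N_H(C)$, and attach the resulting decomposition at the root bag. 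The usual argument shows the pieces glue into a tree decomposition of $H$.
\end{itemize}

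\emph{Main obstacle.} The difficulty is proving $\alpha(G[W_C])\le 3k-2$. The separator bounds $\alpha$ of $N_H(C)\subseteq X$ by $k-1$, so what we need is $\alpha(G[W\cap C])\le 2k-1$. Step 1 only controls $|S\cap C|\le (3k-2)/2$. A different independent set of $W\cap C$ could be larger than this, because $S$ is maximum only in $W$, not in $W\cap C$.

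To close this gap I would first try strengthening Step 1 so that the separator is balanced for \emph{every} independent subset of $W$ simultaneously. One could define $C_X$ as the component with $\alpha(G[W\cap C_X])>\alpha(G[W])/2$, making it unique through a more careful argument. Another option is to iterate the separation: whenever $W\cap C$ still has independence number at least $2k$, take a maximum independent set there, add a further separator of independence number at most $k-1$, and continue. The aim is to do this while keeping the total root bag within the budget $4k-3$.

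If both attempts fail, I would accept a weaker constant in place of $4k-2$ and recover the sharp one by optimizing the balance threshold. The overall bramble-to-separator-to-decomposition scheme should stay unchanged.
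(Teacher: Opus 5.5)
Note that the paper does not prove this lemma; it is quoted from Choi et al. Your Step 1 is correct, and your overall scheme is the right one: root bag $W\cup X$, recursion on $H[C\cup N_H(C)]$ with boundary $W_C=(W\cap C)\cup N_H(C)$, and budget $3k-2$. But the proof is not complete. The invariant $\alpha(G[W_C])\le 3k-2$ is never established, and with the separator from Step 1 it can genuinely fail. Balancing one maximum independent set $S$ of $W$ only gives, by an exchange argument, $\alpha(G[W\cap C])\le |S\cap C|+|S\cap N_H(C)|\le \lfloor (3k-2)/2\rfloor+(k-1)$. This exceeds the needed $2k-1$ as soon as $k\ge 4$, and pushed through your scheme it yields only a bound of order $5k$.

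Your first remedy also fails as stated, and the problem is not uniqueness. Two components of $H-X$ are anticomplete, so a big one is automatically unique. The problem is intersection: with threshold $\alpha(G[W])/2$, big components $C_X,C_Y$ for different separators may be disjoint but joined by edges. That gives only a touching bramble, not a strong one. Iterating separators overspends the root bag, and weakening the constant proves a different statement.

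The missing idea is to balance with respect to $\alpha(G[W\cap C])$ itself, with the threshold raised by $(k-1)/2$. With $\alpha(G[W])=3k-2$, call a component $C$ of $H-X$ big if $\alpha(G[W\cap C])\ge 2k-1$. Suppose $C_X\cap C_Y=\varnothing$. Every vertex of $C_Y$ with a neighbor in $C_X$ lies in $N_H(C_X)\subseteq X$. Take maximum independent sets $I_X$ of $G[W\cap C_X]$ and $I_Y$ of $G[W\cap C_Y]$. Since $|I_Y\cap X|\le k-1$, the set $I_X\cup(I_Y\setminus X)$ is an independent subset of $W$ of size at least $(2k-1)+(2k-1)-(k-1)=3k-1$, a contradiction.

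Thus the big components form a strong bramble, and your covering argument from Step 1 shows its $\alpha$-order is at least $k$. Hence some $X$ with $\alpha(G[X])\le k-1$ leaves $\alpha(G[W\cap C])\le 2k-2$ for every component $C$. This gives $\alpha(G[W_C])\le 3k-3$, and the root bag has independence number at most $4k-3$.

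The same estimate also supplies the progress your induction on $|V(H)|$ needs, which your sketch does not check. If $C\cup N_H(C)=V(H)$, then $W\subseteq C\cup X$ and $\alpha(G[W])\le (2k-2)+(k-1)<3k-2$, which is impossible. This is exactly why the budget must be $3k-2$ rather than $3k-3$.
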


\section{Proof of  Theorem~\ref{thm:main}
}\label{s3}

We first show that every cut of bounded sim-value admits a crossing-edge cover whose independence number is polynomially bounded by the sim-value.

\begin{proposition}\label{thm:cut-cover-intro}
Let $t\geq 2$ and $s\geq 0$ be integers. Let $G$ be an
induced $K_{t,t}$-free graph and $(A,B)$ be a cut of $G$
such that
${sim}_G(A,B)\leq s$.
Then $(A,B)$ has a crossing-edge cover $C$ satisfying
$$\alpha(G[C])=O_t((s+1)^{2t^2-2t}).$$
\end{proposition}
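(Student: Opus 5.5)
The plan is to build the cover $C$ by a bounded-depth branching process driven by maximal induced matchings of crossing edges. Each step of the branching is paid for with a factor $O(s+1)^2$. The induced $K_{t,t}$-freeness is what bounds the depth by about $t(t-1)$, and this gives the exponent $2t^2-2t$. I have not checked that the depth bound closes; it is the key step.

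\textbf{Basic step.} Let $F\subseteq E_G(A,B)$ be any family of crossing edges, and let $M\subseteq F$ be a maximal induced matching of $G$ inside $F$. By hypothesis $|M|\le s$, so $X=V(M)$ has at most $2s$ vertices. By maximality, every edge of $F$ either meets $X$ or has an end adjacent to a vertex of $X$. Hence $F$ is covered by
\[
X\cup\bigcup_{x\in X}\bigl(N_G(x)\cap V(F)\bigr).
\]
The union of the sets $X$ contributes only $O(s)$ to $\alpha$. What remains is, for each $x\in X$, to cover the subfamily $F_x\subseteq F$ of edges having an end in $N_G(x)$ but no end in $X$.

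\textbf{Recursion.} I recurse on $F_x$. A node of the recursion is labelled by the sequence of anchor vertices $x_1,x_2,\dots$ chosen so far. Every edge of the current family has an end in the common neighbourhood of the anchors that lie on the appropriate side. Each anchor is of the form ``endpoint of an edge of $M$'', so it is either in $A$ or in $B$. I therefore sort the anchors into an $A$-list and a $B$-list, and apply Ramsey's theorem (for fixed $t$, this costs only constants depending on $t$) to keep sublists that are independent or cliques. When an $A$-list contains $t$ pairwise nonadjacent anchors, I want their common neighbourhood, restricted to the endpoints of the remaining crossing edges, to have independence number $<t$; otherwise these $t$ anchors together with $t$ independent common neighbours form an induced $K_{t,t}$. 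At that point I stop and put this common-neighbourhood part of the current family into $C$ wholesale.

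\textbf{Main obstacle.} The key step I would have to prove is that along every branch the number of anchors needed before such a stopping configuration is forced is at most $t(t-1)$ on each of the two sides. The exponent $2t^2-2t$ is consistent with $t(t-1)$ branching levels, each of branching factor $O((s+1)^2)$; the precise count of levels is what must be verified. The difficulty is that anchors may be adjacent to each other. In that case the common neighbourhood need not yield an induced biclique, and clique-like anchor sets must instead be charged differently. This is the situation of the example with $A$ independent and $B$ a single vertex; there, putting the anchor itself into $C$ already suffices. The way I would handle it is to show that a clique among anchors contributes only $1$ to $\alpha(G[C])$ when those anchors are placed in $C$ instead of their neighbourhoods. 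Only the independent anchor chains then need the depth argument: a chain of $t-1$ independent anchors plus one further branching step gives the desired induced $K_{t,t}$ when $\alpha$ of the remainder is at least $t$. Iterating this $t$ times, once for each new anchor that has to be added to the biclique side, yields depth $t(t-1)$.

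\textbf{Counting.} The total bound is (number of leaves) $\times$ (contribution of each leaf). The number of leaves is at most $(2s)^{2t(t-1)}$, with Ramsey constants absorbed into $O_t(\cdot)$, and each leaf contributes $\alpha<t$. This gives $\alpha(G[C])=O_t((s+1)^{2t^2-2t})$.
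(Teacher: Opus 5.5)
There is a genuine gap, and it is exactly the step you flag. For the recursion as you describe it, the depth is not bounded by any function of $s$ and $t$.

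\textbf{A counterexample to the depth bound.} Take $A=\{a_1,\dots,a_n\}$ a clique, $B=\{b_1,\dots,b_n\}$ independent, and let the only crossing edges be $a_ib_i$. Every $b_i$ has degree one, so $G$ has no induced $C_4$, hence no induced $K_{t,t}$ for any $t\ge 2$. Also $sim_G(A,B)=1$. With $M=\{a_1b_1\}$ you get $F_{b_1}=\varnothing$ and $F_{a_1}=\{a_ib_i:i\ge 2\}$. Iterating, you obtain a branch along which the families have sizes $n,n-1,\dots,1$. The anchors $a_1,a_2,\dots$ on this branch form a clique, so the stopping configuration ($t$ pairwise nonadjacent anchors) never occurs.

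\textbf{Why the clique remedy fails.} Your remedy for adjacent anchors does not give a stopping rule. By definition $F_x$ contains no edge with an end in $X$, so the family at any node avoids every anchor on its branch. Putting anchors into $C$ therefore covers none of the edges that remain. Ramsey's theorem helps only in the independent case. The leaf count $(2s)^{2t(t-1)}$ is unsupported, and its agreement with the exponent $2t^2-2t$ is numerology.

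\textbf{A smaller, fixable issue.} An edge of $F_x$ has \emph{some} end in $N_G(x)$, and which end a given anchor sees can vary from edge to edge. So ``common neighbourhood of the anchors on the appropriate side'' requires sorting by which end is seen, not by whether the anchor lies in $A$ or $B$.

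\textbf{What is missing.} You need a global rule deciding, for each crossing edge, which endpoint goes into $C$. The paper obtains this by working on the edge set $F=E_G(A,B)$ with two auxiliary graphs: $ef\in E(H_A)$ if $a_e=a_f$ or $a_ea_f\in E(G)$, and $H_B$ is defined in the same way using the $B$-ends.
\begin{enumerate}[label=(\roman*)]
\item Both $H_A$ and $H_B$ are induced $K_{t,t}$-free.
\item An independent set of $H_A\cup H_B$ is a matching of crossing edges whose $A$-ends and $B$-ends are both independent. The extraction lemma of Alon et al.\ (a large matching in an induced $K_{t,t}$-free bipartite graph forces an induced matching of size $s+1$) then gives $\alpha(H_A\cup H_B)<\mathcal{M}(s,t)=O_t(s^t)$.
\item A partition lemma built on Hajebi--Spirkl splits $F=F_1\cup F_2$ with $\alpha(H_A[F_1])$ and $\alpha(H_B[F_2])$ both $O_t\bigl((\mathcal{M}(s,t)+1)^{2t-2}\bigr)$.
\item $C$ consists of the $A$-ends of $F_1$ and the $B$-ends of $F_2$.
\end{enumerate}
The exponent arises as $t\cdot(2t-2)$, not from a recursion depth. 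In the example above $H_A$ is complete, so the rule takes $C=A$ and $\alpha(G[C])=1$ immediately.
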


We use the following three lemmas to prove Proposition \ref{thm:cut-cover-intro}.

\begin{lemma}[Alon et al. \cite{AMR25}]\label{extractinducedmatching}
There exists a function $\mathcal{M}(s,t)=O_t(s^{t})$ for which the following holds. Every induced $K_{t,t}$-free bipartite graph that contains a matching of size at least $\mathcal{M}(s,t)$ contains an induced matching
of size $s+1$.
\end{lemma}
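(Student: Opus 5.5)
The plan is to reduce the statement to a Kővári--Sós--Turán edge count followed by a Turán-type independent set argument. Let $H$ be an induced $K_{t,t}$-free bipartite graph with bipartition $(U,V)$, and let $\{a_ib_i : i\in[m]\}$ be a matching in $H$ with $a_i\in U$, $b_i\in V$. We will determine $m$ at the end.

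The first observation is that in a bipartite graph, induced $K_{t,t}$-freeness is the same as ordinary $K_{t,t}$-freeness. Any copy of $K_{t,t}$ in $H$ is a connected bipartite subgraph, so its two sides lie in opposite parts of $(U,V)$. Each side is therefore independent in $H$, and the copy is induced. Hence the subgraph $H' = H[\{a_1,\dots,a_m\}\cup\{b_1,\dots,b_m\}]$ is a bipartite graph with $m$ vertices on each side and no $K_{t,t}$ subgraph. By the Kővári--Sós--Turán theorem,
\[
|E(H')| \le c'_t\, m^{2-1/t}
\]
for a constant $c'_t$ depending only on $t$.

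Next, build an auxiliary graph $R$ on vertex set $[m]$, with $ij\in E(R)$ for $i\neq j$ exactly when $a_ib_j\in E(H)$ or $a_jb_i\in E(H)$. Each edge of $H'$ other than the matching edges $a_ib_i$ yields at most one edge of $R$. Therefore $|E(R)|\le c'_t m^{2-1/t}$, and the average degree of $R$ is $d\le 2c'_t m^{1-1/t}$. By the Caro--Wei/Turán bound,
\[
\alpha(R)\ge \frac{m}{d+1} \ge \frac{m^{1/t}}{2c'_t+1}.
\]

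Let $I$ be an independent set of $R$. Then $\{a_ib_i : i\in I\}$ is an induced matching in $H$. Indeed, there are no edges $a_ia_j$ or $b_ib_j$ because $H$ is bipartite with the $a$'s in $U$ and the $b$'s in $V$. There are no edges $a_ib_j$ with $i\neq j$ in $I$ by the definition of $R$.

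It remains to choose $m$. Setting $\mathcal{M}(s,t)=\lceil((2c'_t+1)(s+1))^{t}\rceil = O_t(s^t)$ makes $\alpha(R)\ge s+1$, which gives the required induced matching. If the given matching is larger than $\mathcal M(s,t)$, we simply restrict to $\mathcal M(s,t)$ of its edges.

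The only nontrivial ingredient is the first step: the equivalence of induced and non-induced $K_{t,t}$ in bipartite graphs, which makes Kővári--Sós--Turán applicable. This step is where the polynomial exponent $t$ comes from. A naive Ramsey argument on the four possible edge patterns between pairs of matching edges would only give an exponential bound in $s$.
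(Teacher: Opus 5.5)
Your proof is correct: the observation that induced and non-induced $K_{t,t}$ coincide in bipartite graphs, the K\H{o}v\'ari--S\'os--Tur\'an bound on the subgraph spanned by the matching, and a Caro--Wei/Tur\'an independent set in the conflict graph on the matching edges together give an induced matching of size $\Omega_t(m^{1/t})$, hence $\mathcal{M}(s,t)=O_t(s^t)$. The paper does not prove this lemma itself but imports it from Alon et al.; your argument is the natural route to this polynomial bound, and the paper uses the same bipartite observation (``no $K_{t,t}$ as subgraph'') when it applies the lemma in the proof of Proposition~\ref{thm:cut-cover-intro}.
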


\begin{lemma}[Hajebi and Spirkl~\cite{HS26a}]\label{lem:fixed-two-graph}
Let $t$ and $c$ be positive integers. Let $H_1$ and $H_2$ be induced $K_{t,t}$-free graphs on the same vertex set $V$ such that $\alpha(H_1\cup H_2)<c$.
Then there exist \(V_1,V_2\subseteq V\) with \(V_1\cup V_2=V\) such that
$\alpha\bigl(H_i[V_i]\bigr)<(2t)^{t(c-1)}$ for each \(i\in\{1,2\}\).
\end{lemma}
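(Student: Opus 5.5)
We prove the statement by induction on $c$. The only tools are the induced $K_{t,t}$-freeness of $H_1$ and $H_2$ and the observation that deleting a closed neighbourhood in $H:=H_1\cup H_2$ lowers $\alpha(H)$. Write $N_c=(2t)^{t(c-1)}$. For $c=1$ we have $V=\varnothing$ and there is nothing to prove. For the inductive step, if $\alpha(H_1)<N_c$ we take $V_1=V$ and $V_2=\varnothing$. So assume $H_1$ has an independent set $I$ with $|I|=N_c$. The aim is to split $V$ into pieces on which either $H_1$ has small independence number, or $H_2$ has small independence number, or $\alpha(H)\le c-2$ so that induction applies. We then take $V_1$ and $V_2$ to be unions of the pieces of each kind. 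Since independence number is subadditive over unions, the bound is $N_c$ provided the number of pieces times the inductive bound $N_{c-1}$ is at most $N_c$. This forces at most about $(2t)^t$ pieces per level of the induction.

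The pieces come from the $H_1$-neighbourhoods of vertices into $I$. First, let $T\subseteq I$ with $|T|=t$, and let $W_T$ be the set of vertices $H_1$-complete to $T$. Then $\alpha(H_1[W_T])<t$: an independent $t$-set in $W_T$ together with the independent set $T$ would induce $K_{t,t}$ in $H_1$. So vertices with at least $t$ $H_1$-neighbours in $I$ are absorbed into $V_1$ cheaply, as long as only boundedly many sets $T$ are needed. Second, consider a vertex $v$ with fewer than $t$ $H_1$-neighbours in $I$. Then $v$ is $H_1$-anticomplete to almost all of $I$. We will use $H_2$ on $I$: since $\alpha(H_2[I])\le\alpha(H[I])<c$ and $H_2$ is induced $K_{t,t}$-free, a Ramsey-type argument inside $I$ should give a vertex $x\in I$ (or a set $X\subseteq I$ of size less than $c$) whose non-neighbourhood in $H$ behaves well. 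For every vertex $v\notin N_H[x]$, the set $V\setminus N_H[x]$ satisfies $\alpha(H[V\setminus N_H[x]])\le c-2$. Applying induction there gives sets $V_1',V_2'$ with independence numbers below $N_{c-1}$.

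The remaining vertices lie in $N_{H_1}(x)\cup N_{H_2}(x)$ for the chosen $x\in I$. Vertices in $N_{H_2}(x)\setminus N_{H_1}(x)$ we try to place in $V_1$. Vertices in $N_{H_1}(x)$ we handle by the $W_T$ trick: $x$ lies in the independent set $I$, and we choose $t$-subsets $T\ni x$ of $I$ greedily so that each such vertex has $t$ $H_1$-neighbours in some chosen $T$.

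The main obstacle is bounding the number of $t$-subsets $T$ needed, and controlling $H_1$ on $N_{H_2}(x)$. Both are needed to keep the multiplicative loss per level at $(2t)^t$. As a first attempt, we would choose $x_1,\dots,x_t\in I$ sequentially. At each step we keep the half of the remaining vertices that are $H_1$-adjacent to the new $x_i$, or else recurse on the $H$-non-neighbourhood, losing a factor $2t$ per step. Over $t$ steps this loses $(2t)^t$, which matches the exponent $t(c-1)$ after $c-1$ levels of the induction on $c$.
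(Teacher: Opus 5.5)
The paper contains no proof of this statement. It is quoted from Hajebi and Spirkl and used as a black box inside Lemma~\ref{prop:poly-selector}. Judged on its own, your outline has a genuine gap. Its decisive steps are stated as hopes (``should give'', ``we try to place'', ``as a first attempt''), and as stated they fail. First, the induction on $c$ does no work in the first nontrivial case. For $c=2$ the graph $H=H_1\cup H_2$ is complete, so $V\setminus N_H[x]=\varnothing$. The whole problem then sits in the part $N_{H_1}(x)\cup N_{H_2}(x)=V\setminus\{x\}$, which you leave open. Your rule for that part already breaks on the following example. Let $t\ge 2$ and $V=Q\cup J$ with $|Q|=|J|=m>(2t)^t$. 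Let $H_1$ consist of a clique on $Q$ together with the vertices of $J$ as isolated vertices. Let $H_2=\overline{H_1}$, i.e.\ a clique on $J$ joined completely to the independent set $Q$. Both graphs are induced $K_{t,t}$-free (a disjoint union of cliques and a split graph), and $\alpha(H_1\cup H_2)=1$. Whatever $x\in I$ you pick, $N_{H_2}(x)\setminus N_{H_1}(x)\supseteq J\setminus\{x\}$. Placing this set in $V_1$ therefore gives $\alpha(H_1[V_1])\ge m-1\ge (2t)^t$. The correct cover is $V_1=Q$, $V_2=J$: the large $H_1$-independent set has to go into $V_2$, the opposite of your rule. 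Likewise, vertices of $N_{H_1}(x)$ need not have $t$ $H_1$-neighbours in $I$. Where they do, the $W_T$ device may need up to $\binom{|I|}{t}$ sets $T$, a loss of order $|I|^t$ rather than a factor $(2t)^t$. This is the same device the paper uses for $D_1$ in Lemma~\ref{prop:poly-selector}. It is affordable there only because it is applied to an $H$-independent set of size at most $r$, and the remaining vertices are handled by the very lemma you are trying to prove. So your outline reproduces the easy half of that argument and leaves the hard half open.

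The missing idea is a genuine use of the hypothesis that $H_2$ is induced $K_{t,t}$-free, and this hypothesis is indispensable. Take $H_1=mK_m$, the disjoint union of $m$ cliques of size $m$, which is induced $K_{t,t}$-free for $t\ge 2$, and take $H_2=\overline{H_1}$. Then $\alpha(H_1\cup H_2)=1$, yet for $m\ge (2t)^t$ no admissible cover exists. Indeed, $V_1$ meets fewer than $(2t)^t$ of the cliques, so some clique lies inside $V_2$, and that clique is an $H_2$-independent set of size $m$. In this example $H_2[I]$ is complete for every $H_1$-independent set $I$. Hence any argument that, like yours, invokes the $K_{t,t}$-freeness of $H_2$ only inside $I$ cannot succeed. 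The hypothesis must be applied to $H_2$-independent sets (here, the cliques of $H_1$), with $H_1$ and $H_2$ playing symmetric roles. Your outline gives no mechanism for this, nor for the claimed loss of only a factor $(2t)^t$ per level. Keeping ``half of the remaining vertices'' at each step controls vertex counts, not the independence numbers of the discarded parts, which is what the bound requires.
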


Define the polynomial
$p_t(r)=r+(t-1)\binom rt+(2t)^{t(2t-2)}\sum_{j=1}^{2t-2}\binom rj.$
Then for fixed $t$, $p_t(r)=O_t((r+1)^{2t-2}).$

\begin{lemma}
\label{prop:poly-selector}
Let $t$ and $r$ be positive integers, $t\geq2$.  Let $H_1$ and $H_2$ be induced $K_{t,t}$-free graphs on the same vertex set $V$ satisfying $\alpha(H_1\cup H_2)\leq r.$
Then  there is a partition $\{V_1,V_2\}$ of $V$ such that
$$\alpha(H_1[V_{1}])\leq p_t(r), \ \  \alpha(H_2[V_{2}])\leq p_t(r).$$
\end{lemma}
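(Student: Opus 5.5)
The plan is to fix a maximum independent set $S$ of $H_1\cup H_2$, so that $|S|\le r$ and every vertex of $V\setminus S$ has a neighbour in $S$ in $H_1$ or in $H_2$. We then sort the vertices outside $S$ according to how they attach to $S$ and handle each class separately. The three summands of $p_t(r)$ correspond to the three kinds of classes. Throughout we use two facts: $\alpha$ of an induced subgraph on a union of sets is at most the sum of the $\alpha$'s of the pieces, and shrinking a set does not increase $\alpha$. So it suffices to produce two sets $V_1,V_2$ with $V_1\cup V_2=V$ and then turn them into a partition by deleting overlaps arbitrarily.

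\textbf{Step 1: $S$ and the high-degree vertices.} Put $S$ into $V_1$; it contributes at most $r$ to $\alpha(H_1[V_1])$.

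Next, let $v\notin S$ have $|N_{H_1}(v)\cap S|\ge t$, and fix a $t$-set $T\subseteq N_{H_1}(v)\cap S$. We claim the set $Y_T$ of all vertices outside $S$ that are $H_1$-complete to $T$ satisfies $\alpha(H_1[Y_T])\le t-1$. Indeed, $T$ is independent in $H_1$, since $S$ is independent in $H_1\cup H_2$. So any $t$ pairwise $H_1$-nonadjacent vertices of $Y_T$, together with $T$, would induce a $K_{t,t}$ in $H_1$, which is impossible.

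Putting all such vertices into $V_1$ costs at most $(t-1)\binom rt$, by summing over the $t$-subsets $T$ of $S$. Symmetrically, vertices with at least $t$ $H_2$-neighbours in $S$ go to $V_2$, with $\alpha(H_2[\cdot])\le (t-1)\binom rt$. A vertex satisfying both conditions may be placed in either set.

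\textbf{Step 2: the remaining vertices, grouped by their trace on $S$.} Each remaining vertex $v$ has fewer than $t$ neighbours in $S$ in each of $H_1$ and $H_2$, and at least one neighbour there in $H_1\cup H_2$. Hence $J_v:=N_{H_1\cup H_2}(v)\cap S$ satisfies $1\le|J_v|\le 2t-2$.

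For each $J\subseteq S$ with $1\le |J|\le 2t-2$, let $X_J$ be the set of remaining vertices with $J_v=J$. We claim $\alpha\bigl((H_1\cup H_2)[X_J]\bigr)\le |J|\le 2t-2$. To see this, take an independent set $I\subseteq X_J$ of $H_1\cup H_2$. Its vertices have no neighbours in $S\setminus J$, so $I\cup(S\setminus J)$ is independent in $H_1\cup H_2$. Therefore $|I|+r-|J|\le \alpha(H_1\cup H_2)$, and using $|S|=\alpha(H_1\cup H_2)$ this gives $|I|\le |J|$.

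This swapping argument is the key point. It is exactly why we take $S$ maximum rather than merely maximal; with a maximal $S$ the comparison with $\alpha(H_1\cup H_2)$ fails.

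\textbf{Step 3: apply Hajebi--Spirkl to each group.} Now apply Lemma~\ref{lem:fixed-two-graph} to $H_1[X_J]$ and $H_2[X_J]$ with $c=2t-1$. Both are induced $K_{t,t}$-free, and $\alpha$ of their union is less than $2t-1$ by Step 2. This yields a cover $X_J=X_J^1\cup X_J^2$ with $\alpha(H_i[X_J^i])<(2t)^{t(2t-2)}$ for $i\in\{1,2\}$. Add $X_J^1$ to $V_1$ and $X_J^2$ to $V_2$.

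Summing over the at most $\sum_{j=1}^{2t-2}\binom rj$ choices of $J$, each side receives at most $(2t)^{t(2t-2)}\sum_{j=1}^{2t-2}\binom rj$ from these groups.

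\textbf{Step 4: total.} Adding the contributions from Steps 1 and 3 gives $\alpha(H_i[V_i])\le r+(t-1)\binom rt+(2t)^{t(2t-2)}\sum_{j=1}^{2t-2}\binom rj=p_t(r)$ for $i\in\{1,2\}$. The $r$ term only appears on the $V_1$ side, so the bound for $V_2$ is slightly better than needed. Resolving overlaps between $V_1$ and $V_2$ arbitrarily gives the required partition.
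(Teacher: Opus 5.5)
Your proposal is correct and follows the paper's proof essentially step for step. It uses a maximum independent set $S$ of $H_1\cup H_2$, and handles the vertices with at least $t$ neighbours in $S$ in $H_i$ by $K_{t,t}$-freeness over the $t$-subsets of $S$; your sets $Y_T$ are a covering version of the paper's assignment of a $t$-subset to each vertex of an independent set. It then groups the remaining vertices by their trace on $S$, with the exchange argument giving $\alpha\le|J|\le 2t-2$, and applies Lemma~\ref{lem:fixed-two-graph} with $c=2t-1$ to each group. One small slip: in Step~2 the exchange inequality should read $|I|+|S|-|J|\le\alpha(H_1\cup H_2)=|S|$, not use $r$ in place of $|S|$, since $|S\setminus J|=|S|-|J|$ and $|S|$ may be strictly smaller than $r$.
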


\begin{proof}
Let $H=H_1\cup H_2$ and $I_H$ be a maximum independent set of $H$. Then $|I_H|\leq r$.  Define
\begin{align*}
  D_1&=\{v\in V\setminus I_H:|N_{H_1}(v)\cap I_H|\geq t\},\\
  D_2&=\{v\in V\setminus(I_H\cup D_1):
                 |N_{H_2}(v)\cap I_H|\geq t\}.
\end{align*}

We first claim that
  $$\alpha(H_1[D_1])\leq (t-1)\binom{|I_H|}{t}, \ \ \alpha(H_2[D_2])
  \leq (t-1)\binom{|I_H|}{t}.$$
If $|I_H|<t$, then no vertex has $t$ neighbors in $I_H$. Hence
$D_1=D_2=\varnothing$.
Since $\binom{|I_H|}{t}=0$, both inequalities hold.
Now suppose that $|I_H|\ge t$.
Let $I_{D_1}$ be the maximum independent set of $H_1[D_1]$.
For each $v\in I_{D_1}$, choose a $t$-set $I_v\subseteq N_{H_1}(v)\cap I_H$.
For any fixed $I\subseteq I_H$ with $|I|=t$, there are at most $t-1$ vertices $v\in I_{D_1}$ such that $I_v=I$.
Indeed, if there are $t$ such vertices, then these vertices together with $I$ induce a $K_{t,t}$ in $H_1$, a contradiction.
Moreover, there are at most $\binom{|I_H|}{t}$  subsets of size $t$ in $I_H$. Then $\alpha(H_1[D_1]) \leq (t-1)\binom{|I_H|}{t}$.
The second inequality follows similarly. The Claim follows.

Let $L=V\setminus(I_H\cup D_1\cup D_2)$ and $q=\sum_{k=1}^{2t-2}\binom {|I_H|} {k}$.
Note that for any vertex $v\in L$ and $i\in \{1,2\}$, $|N_{H_i}(v)\cap I_H|\leq t-1$.
Then $|N_{H}(v)\cap I_H|\leq 2t-2$.
Let $C_j\subseteq I_H$ with $|C_j|\le 2t-2$, $j\in \{1,2,\ldots, q\}$.
Note that for every vertex $v$ of $L$,  $N_H(v)\cap I_H \neq \emptyset$. Otherwise $I_H\cup \{v\}$ is a larger independent set of $H$, a contradiction.
For each  $j\in [q]$, define $L_{C_j}=\{v\in L:N_H(v)\cap I_H=C_j\}$ such that every vertex $v$ of $L$ belongs to a unique $L_{C_j}$.
Then $\{L_{C_1},\ldots,L_{C_q}\}$ form a partition of $L$.
It is possible that some of them are empty.
Let $I_{C_j}$ be an independent set in $H[L_{C_j}]$, then
$I_{C_j}\cup(I_H\setminus C_j)$ is independent in $H$.  Thus
$|I_{C_j}|+|I_{H}|-|C_j|\leq|I_{H}|$, that is $|I_{C_j}|\leq|C_j|$. This implies that
$$\alpha(H[L_{C_j}])\leq |C_j|\leq 2t-2.$$
Applying Lemma~\ref{lem:fixed-two-graph} to $H_1[L_{C_j}]$ and $H_2[L_{C_j}]$,  $j\in[q]$, we have a partition
$\{L_{C_j}^{'}, L_{C_j}^{''}\}$ of $L_{C_j}$
such that
$$ \alpha(H_1[L_{C_j}^{'}])\leq (2t)^{t(2t-2)} \mbox{ and } \alpha(H_2[L_{C_j}^{''}])\leq (2t)^{t(2t-2)}.$$

Let
\begin{align*}
  V_1=I_H\cup D_1\cup\bigcup_{j} L_{C_j}^{'},\quad
  V_2=D_2\cup\bigcup_{j} L_{C_j}^{''}.
\end{align*}
Then $\{V_1, V_2\}$ is a partition of $V$.  By subadditivity of the independence number, we have
\[
  \alpha(H_1[V_1])
  \leq |I_H|+(t-1)\binom {|I_H|} {t}
  +(2t)^{t(2t-2)}\sum_{j=1}^{2t-2}\binom {|I_H|} {j}
  \leq p_t(r).
\]
The same calculation, without the first term, bounds $\alpha(H_2[V_2])$ by
$p_t(r)$.  This proves the lemma.
\end{proof}

\begin{proof}[Proof of Proposition~\ref{thm:cut-cover-intro}]
Let $F=E_G(A,B)$.  If $F=\varnothing$, take $C=\varnothing$.  Otherwise,
write every $e\in F$ as $e=a_eb_e$, where $a_e\in A$ and $b_e\in B$.
Define auxiliary graphs $H_A$ and $H_B$ on vertex set $F$ and edge sets
$$E(H_A)=\{ef: a_e=a_f \text{ or } a_ea_f\in E(G)\},$$
and
$$E(H_B)=\{ef: b_e=b_f \text{ or } b_eb_f\in E(G)\}.$$

We first claim that $H_A$ and $H_B$ are induced $K_{t,t}$-free. Suppose, to the contrary, that $H_A$
contains an induced $K_{t,t}$ with bipartition $X=\{e_1,\ldots,e_t\}$ and $Y=\{f_1,\ldots,f_t\}.$
Since $X$ is independent in $H_A$, the vertices $a_{e_1},\ldots,a_{e_t}$
are distinct and form an independent set in $G$.
The same conclusion holds for the vertices $a_{f_1},\ldots,a_{f_t}$.
Suppose that there exists  $e_i\in X$ and  $f_j\in Y$ such that  $a_{e_i}=a_{f_j}$.
Because $t\geq2$, there is an $e_{i'}\in X\setminus\{e_i\}$.
Since  $X$ is independent in $H_A$, we have $a_{e_{i'}}\neq a_{e_i}$ and $a_{e_{i'}}a_{e_i}\notin E(G)$.
Since $a_{e_i}=a_{f_j}$, we have $a_{e_{i'}}\neq a_{f_j}$ and $a_{e_{i'}}a_{f_j}\notin E(G)$.
Thus $e_{i'}f_j\notin E(H_A)$, contradicting the fact that  $H_A[X\cup Y]\cong K_{t,t}$.
Hence, the vertices $a_{e_1},\ldots,a_{e_t},a_{f_1},\ldots,a_{f_t}$ are distinct.
Since $e_if_j\in E(H_A)$, we have $a_{e_i}a_{f_j}\in E(G)$ for every  $i,j\in[t]$. Therefore,
$G\bigl[\{a_{e_1},\ldots,a_{e_t}\} \cup \{a_{f_1},\ldots,a_{f_t}\}\bigr]\cong K_{t,t}$,
contradicting the assumption  that $G$ is
induced $K_{t,t}$-free. Hence $H_A$ is
induced $K_{t,t}$-free. By symmetry, the same holds for $H_B$.

We next claim that $\alpha(H_A\cup H_B)\leq \mathcal{M}(s,t)-1$, where $\mathcal{M}(s,t)$ is given by Lemma \ref{extractinducedmatching}.
Let $I$ be an independent set of $H_A\cup H_B$. Here
$I\neq\varnothing$, since the desired bound is trivial otherwise.  Then the sets $A_I=\{a_e:e\in I\}$ and $B_I=\{b_e:e\in I\}$ are independent sets of $G$, both of size $|I|$, and the edges indexed by
$I$ form a matching of $G$.  The graph $G[A_I\cup B_I]$ is bipartite and has no $K_{t,t}$ as subgraph.
If $|I|\geq \mathcal{M}(s,t)$, by lemma
\ref{extractinducedmatching}, $G[A_I\cup B_I]$ contains an induced matching
of size $s+1$, contradicting the fact that $sim_G(A, B)\leq s$.  Hence
$\alpha(H_A\cup H_B)\leq \mathcal{M}(s,t)-1$.

By Lemma~\ref{prop:poly-selector}, we can obtain a partition
$\{F_1, F_2\}$ of $F$ such that
$\alpha(H_A[F_1])\leq p_t(\mathcal{M}(s,t)-1)$ and $\alpha(H_B[F_2])\leq p_t(\mathcal{M}(s,t)-1)$. Let $$C=\{a_e:e\in F_1\}\cup\{b_e:e\in F_2\}.$$
This set covers every edge of $F$.  Let $I_1$ be an independent set in
$G[\{a_e:e\in F_1\}]$.
The edges in $F_1$ that are indexed by $I_1$ form
an independent set in $H_A[F_1]$, so $|I_1|\leq \alpha(H_A[F_1])\leq p_t(\mathcal{M}(s,t)-1)$.  The analogous statement holds on $H_B[F_2]$.  Since $A\cap B=\varnothing$, we conclude that $\alpha(G[C])  \leq2p_t(\mathcal{M}(s,t)-1)$, and so $\alpha(G[C])=O_t((s+1)^{2t^2-2t}).$
This proves the proposition.
\end{proof}

We next transfer cut covers in a branch decomposition to a tree decomposition.
\begin{proposition}\label{prop:transfer-intro}
Let $(T,\varphi)$ be a branch decomposition of a graph $G$ and $q$ be a positive
integer.  Suppose that for
each edge $e\in E(T)$, $(A_e,B_e)$ have a
crossing-edge cover $C_e$ satisfying $\alpha(G[C_e])\leq q$. Then $tree\text{-}\alpha(G)\leq 8q+1$.
\end{proposition}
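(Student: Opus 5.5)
We argue via the bramble duality of Lemma~\ref{thm:bramble-duality}, since the constant $8q+1$ has exactly the shape $4k-2$ with $k=2q+1$: indeed $4(2q+1)-2=8q+2$. So it suffices to show that every strong bramble $\mathcal B$ in $G$ has $ord_\alpha(\mathcal B)\le 2q$. Then $tree\text{-}\alpha(G)\ge 8q+2$ would give a strong bramble of $\alpha$-order at least $2q+1$, a contradiction, and hence $tree\text{-}\alpha(G)\le 8q+1$.

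Fix a strong bramble $\mathcal B$. We first note that if some $C_e$ covers $\mathcal B$, then $ord_\alpha(\mathcal B)\le q$ and we are done. So assume that no $C_e$ covers $\mathcal B$. For each edge $e\in E(T)$, pick $B\in\mathcal B$ with $B\cap C_e=\varnothing$. Since $G[B]$ is connected and $C_e$ covers every edge of $E_G(A_e,B_e)$, $B$ contains no crossing edge. Hence $B$ lies entirely in $A_e$ or entirely in $B_e$ (if $|B|=1$ this is immediate).

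Orient $e$ toward the side containing $B$. We check that this is well defined. Suppose $B,B'\in\mathcal B$ both avoid $C_e$ but lie on opposite sides. Then $B\cap B'=\varnothing$, contradicting that $\mathcal B$ is pairwise intersecting. So every bramble set avoiding $C_e$ lies on a single side of $e$.

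Now apply the standard argument. Every edge of $T$ is oriented, so following the orientations from any vertex yields a sink. This is either an internal node $x$ with all three incident edges pointing into $x$, or a leaf $\ell$ whose edge points to $\ell$.

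In the leaf case, the edge $e$ at $\ell$ has one side equal to the single vertex $v=\varphi^{-1}(\ell)$. Some $B$ avoiding $C_e$ lies inside $\{v\}$, so $B=\{v\}$. Every other bramble set meets $B$ and therefore contains $v$. Thus $\{v\}$ covers $\mathcal B$, and $ord_\alpha\le 1\le 2q$.

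In the internal case, let $e_1,e_2,e_3$ be the edges at $x$, and let $S_1,S_2,S_3$ be the leaf-vertex sets of the three components of $T-x$. Here $B_i$ avoiding $C_{e_i}$ lies in the side of $e_i$ toward $x$, which is $S_j\cup S_k$ for $\{i,j,k\}=\{1,2,3\}$.

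The goal in this case is to show that $C_{e_1}\cup C_{e_2}$ covers $\mathcal B$, which gives $\alpha\le 2q$. Suppose instead that some $B\in\mathcal B$ avoids both $C_{e_1}$ and $C_{e_2}$. Then $B\subseteq S_2\cup S_3$ and $B\subseteq S_1\cup S_3$, so $B\subseteq S_3$. Meanwhile $B_3$ avoids $C_{e_3}$ and lies in $S_1\cup S_2$, so $B\cap B_3=\varnothing$, a contradiction.

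The main obstacle is the bookkeeping in the internal case: correctly identifying which side of each $e_i$ the avoiding sets lie on. The logic itself is the usual tangle/bramble sink argument. Note also that the Helly property (Lemma~\ref{HellyP}) is not needed here, since the orientation argument suffices.
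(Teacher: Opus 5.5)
Your proof is correct. Its outer structure matches the paper's: you bound the $\alpha$-order of every strong bramble by $2q$, using a single vertex at a leaf and $C_{e_1}\cup C_{e_2}$ at a degree-three node, and then apply Lemma~\ref{thm:bramble-duality} with $k=2q+1$.

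The difference is in how you locate the node.

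\emph{The paper's route.} For each $B\in\mathcal B$ it takes the minimal subtree $T_B$ spanning $\varphi(B)$. These subtrees are pairwise intersecting, so the Helly property gives a node $z$ common to all of them. Minimality of $T_B$ then forces every $B$ to meet at least two of the three branches at $z$. Connectivity of $G[B]$ therefore puts a crossing edge of $e_1$ or of $e_2$ inside $B$.

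\emph{Your route.} You use the covers themselves. First you dispose of the case where a single $C_e$ already covers $\mathcal B$. You then orient each edge toward the side containing a bramble set disjoint from $C_e$, which is well defined because bramble sets pairwise intersect. Taking a sink, you get the covering by contradiction, using the set $B_3$ trapped on the far side of $e_3$.

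\emph{What each buys.} The paper's node $z$ depends only on $\mathcal B$ and $T$. So any crossing-edge covers at $z$ work, and no preliminary case on a single $C_e$ is needed. Your orientation argument is the standard tangle/haven sink argument and avoids the Helly property entirely. The cost is the extra case split and a node that depends on the chosen covers.

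One trivial omission: when $|V(G)|=1$, the tree $T$ has no edges. Your leaf case, which refers to ``the edge at $\ell$,'' then does not literally apply. There the only strong bramble is $\{\{v\}\}$, so the bound is immediate.
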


\begin{proof}
Let $\mathcal B$ be a strong bramble in $G$. Recall that for a vertex $v\in V(G)$, $\varphi(v)$ is a leaf of $T$.
For each $B\in\mathcal B$, let $T_B$ be the minimal subtree of $T$ containing $\{\varphi(v):v\in B\}$.
 Since any two members $B$ and $B'$ in  $\mathcal B$ intersect,
the subtrees $T_B$ and $T_{B'}$ share a leaf.  Hence the family
$\{T_B:B\in\mathcal B\}$ is pairwise intersecting.  By Lemma \ref{HellyP}, all trees of $\{T_B:B\in\mathcal B\}$ have a vertex $z$ in common, that is, $z\in\bigcap_{B\in\mathcal B}V(T_B)$.

If $z$ is a leaf of $T$,  write $z=\varphi(v)$.
We  have $z\in\varphi(B)$. Otherwise, deleting $z$ from $T_B$ would leave a smaller subtree containing $\varphi(B)$,
contrary to the minimality of $T_B$.
Thus $v\in B$ for every $B\in\mathcal B$, and $\{v\}$ covers $\mathcal B$.  Therefore
$ord_{\alpha}(\mathcal B)\leq1\leq2q.$

If $z$ is not a leaf  of $T$, then the degree of $z$ in $T$ is three. Let $e_1,e_2,e_3$ be the three edges in $T$ incident with $z$.
The components of $T-z$ define a partition $\{V_1, V_2, V_3\}$ of $V(G)$, where $V_i$ is the set of vertices whose corresponding leaves lie in the component that contains an end of $e_i$.
For every $B\in\mathcal B$, since $z\in T_B$ and $T_B$ is minimal,  $B$ meets at least two of $V_1,V_2,V_3$.

We claim that $C_{e_1}\cup C_{e_2}$ covers $\mathcal B$.  Fix
$B\in\mathcal B$.  If $B\cap V_1\neq\varnothing$, then $B$ also meets
$V_2\cup V_3$.  Since $G[B]$ is connected, it contains an edge of
$E_G(V_1,V_2\cup V_3)$.  This edge is covered by $C_{e_1}$, and both its
ends belong to $B$. Hence $B\cap C_{e_1}\neq\varnothing$.
If $B\cap V_1=\varnothing$, then $B$ meets both $V_2$ and $V_3$.
The connectivity of  $G[B]$ gives an edge of $E(G[B])\cap E_G(V_2,V_1\cup V_3)$, and therefore $B\cap C_{e_2}\neq\varnothing$.  This proves the claim.

For every independent set $I\subseteq C_{e_1}\cup C_{e_2}$, we have
$$|I| \leq |I\cap C_{e_1}|+|I\cap C_{e_2}| \leq \alpha(G[C_{e_1}])+\alpha(G[C_{e_2}])  \leq2q.$$
Consequently,
$$ord_{\alpha}(\mathcal B)\leq \alpha\bigl(G[C_{e_1}\cup C_{e_2}]\bigr)\leq2q.$$

We have proved that every strong bramble in $G$ has $\alpha$-order at most $2q$.
By Lemma~\ref{thm:bramble-duality},  we have $ tree\text{-}\alpha(G)\leq8q+1$.
\end{proof}


\begin{proof}[Proof of Theorem~\ref{thm:main}]
The case $|V(G)|\leq1$ is immediate. Since $simw(G)\leq s$, we have a branch
decomposition $(T,\varphi)$ of $G$ of sim-width at most $s$.  For every $e\in E(T)$,
the corresponding cut $(A_e,B_e)$ satisfies
$sim_G(A_e,B_e)\leq s$.
By Proposition~\ref{thm:cut-cover-intro}, there is a crossing-edge cover $C_e$ such that $\alpha(G[C_e])=O_t((s+1)^{2t^2-2t}).$
By Proposition~\ref{prop:transfer-intro},
$$tree\text{-}\alpha(G)\leq c_t(s+1)^{2t^2-2t},$$
where $c_t>0$ is a constant depending only on $t$.
The proof is complete.
\end{proof}

\section{Proof of Theorem  \ref{thm:mu-main}
}
\label{s4}


The following lemma gives the relation between degeneracy and chromatic number, which is used in the proof of Theorem  \ref{thm:mu-main}.
A graph $G$ is called \emph{$k$-degenerate} if every nonempty induced subgraph $H$ of $G$ contains a vertex $v$ with degree at most $k$ in $H$. We use $\delta(H)$ to denote the minimum degree of $H$.

\begin{lemma}[Szekeres--Wilf \cite{SW68}]
\label{lem:Szekeres-Wilf}
For every graph $G$,
$$\chi(G) \leq 1+\max\bigl\{\delta(H): H\text{ is a nonempty induced subgraph of }G\bigr\}.$$
In particular, every $k$-degenerate graph is $(k+1)$-colorable.
\end{lemma}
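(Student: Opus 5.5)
The plan is to argue by induction on $|V(G)|$, or equivalently via a greedy colouring along a degeneracy ordering. Set
$$k=\max\bigl\{\delta(H): H\text{ is a nonempty induced subgraph of }G\bigr\}.$$
If $G$ has no vertices the claim is trivial, so assume $V(G)\neq\varnothing$.

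First I build an elimination ordering $v_1,\dots,v_n$ of $V(G)$. Let $G_1=G$. Given $G_i$, choose $v_i$ to be a vertex of minimum degree in $G_i$, and set $G_{i+1}=G_i-v_i$. Each $G_i$ is a nonempty induced subgraph of $G$, so by the definition of $k$ we get $\deg_{G_i}(v_i)=\delta(G_i)\le k$. In other words, $v_i$ has at most $k$ neighbours among $v_{i+1},\dots,v_n$.

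Next I colour greedily in the reverse order $v_n,v_{n-1},\dots,v_1$, using the palette $[k+1]$. When it is $v_i$'s turn, the vertices already coloured are exactly $v_{i+1},\dots,v_n$. At most $k$ of these are neighbours of $v_i$, so they block at most $k$ colours. Hence some colour in $[k+1]$ is still free, and I assign it to $v_i$. The outcome is a proper colouring of $G$ with at most $k+1$ colours, so $\chi(G)\le 1+k$.

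For the ``in particular'' part, suppose $G$ is $k$-degenerate. Then every nonempty induced subgraph $H$ contains a vertex of degree at most $k$ in $H$, so $\delta(H)\le k$ for all such $H$. The maximum above is therefore at most $k$, and the first part gives $\chi(G)\le k+1$.

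There is no serious obstacle here. The one point to handle carefully is that each minimum-degree bound is taken in the current remaining graph $G_i$, not in $G$. This is exactly why the colouring must go in reverse elimination order: then the previously coloured neighbours of $v_i$ are precisely its neighbours in $G_i$.
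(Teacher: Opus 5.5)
Your proof is correct: building a minimum-degree elimination ordering and colouring greedily in reverse order is exactly the classical argument behind the Szekeres--Wilf bound, and you rightly take each degree bound in the remaining induced subgraph $G_i$ rather than in $G$. The paper gives no proof of its own and simply cites Szekeres and Wilf, using only the ``in particular'' clause to $(2\kappa_\mu(t)+1)$-colour the auxiliary graph $F$ in Claim~3.
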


 We next show that bounded induced matching number implies bounded VC-dimension.

\begin{lemma}\label{lem:vc-from-im}
Let $\mu\geq1$ be an integer. If $G$ is a bipartite graph with $im(G)\leq \mu$, then
$\operatorname{VC}(G)\leq \mu.$
\end{lemma}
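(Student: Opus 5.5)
The plan is to show that a large shattered set produces a large induced matching directly. By symmetry of the definition of $\operatorname{VC}(G)$, it suffices to treat $\mathcal{N}_G(U)$. Suppose, for contradiction, that some set $S\subseteq V$ with $|S|=\mu+1$ is shattered by $\mathcal{N}_G(U)$; write $S=\{v_1,\ldots,v_{\mu+1}\}$. For each $i\in[\mu+1]$, apply the shattering property to the singleton $X=\{v_i\}$. This yields a vertex $u_i\in U$ with $N_G(u_i)\cap S=\{v_i\}$.

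We then claim that $M=\{u_1v_1,\ldots,u_{\mu+1}v_{\mu+1}\}$ is an induced matching in $G$, which contradicts $im(G)\le\mu$.
\begin{enumerate}[label=(\roman*)]
\item Each $u_iv_i$ is an edge, since $v_i\in N_G(u_i)$.
\item The $u_i$ are distinct, because distinct $i$ give distinct traces $N_G(u_i)\cap S$. The $v_i$ are distinct by choice, and $U\cap V=\varnothing$, so $M$ is a matching.
\item No further edges appear among the ends. Because $G$ is bipartite, edges can only run between some $u_i$ and some $v_j$. For $i\ne j$ we have $v_j\in S\setminus\{v_i\}$, hence $v_j\notin N_G(u_i)$, so $u_iv_j\notin E(G)$.
\end{enumerate}
The same argument with the roles of $U$ and $V$ exchanged handles $\mathcal{N}_G(V)$.

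There is also a degenerate case. If $\mathcal{N}_G(U)$ shatters no nonempty set, the VC-dimension is $0\le\mu$ by convention, so there is nothing to prove. No real obstacle is expected. The only point needing care is that shattering is used only on singletons, so the bound is in fact stronger than needed, and that distinctness of the $u_i$ must be justified as in (ii).
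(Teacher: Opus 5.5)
Your proposal is correct and follows essentially the same argument as the paper: shattering is applied to singletons to obtain $u_i$ with $N_G(u_i)\cap S=\{v_i\}$, and the edges $u_iv_i$ then form an induced matching of size $\mu+1$. Your explicit justification that the $u_i$ are distinct is a detail the paper leaves implicit.
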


\begin{proof}
Let $G$ be a bipartite graph with bipartition $(X,Y)$.  Suppose that there exists a set $Y'=\{y_1,\ldots,y_{\mu+1}\}\subseteq Y$ shattered by $\{N_G(x):x\in X\}$.
For every $i$, choose $x_i\in X$ such that $N_G(x_i)\cap Y'=\{y_i\}.$
Then $\{x_1,x_2,\ldots,x_{\mu+1}\}$ and $\{y_1,y_2,\ldots,y_{\mu+1}\}$ are independent set in $G$ and $G[\{x_1,x_2,\ldots,x_{\mu+1}\}\cup\{y_1,y_2,\ldots,y_{\mu+1}\}]$ only contains edges $\{x_1y_1, x_2y_2,\ldots,x_{\mu+1}y_{\mu+1}\}$. These edges form an induced
matching of size $\mu+1$, a contradiction.  Thus $\mathcal{N}_G(X)$ has
VC-dimension at most $\mu$.  The same bound holds for $\mathcal{N}_G(Y)$. Therefore, $\operatorname{VC}(G)\leq \mu$.
\end{proof}

The following threshold form of  Hons's theorem is used to obtain a matching bound that is polynomial in $t$.

\begin{lemma}[Hons \cite{Hons26}]
\label{thm:hons}
For every integer $\mu\geq1$, there exist a positive integer $N_{\mu}$ and two positive constants $A_{\mu}$ and $\beta_{\mu}$ such that the following holds.
Let $G$ be a bipartite graph with bipartition $(X,Y)$ and $\operatorname{VC}(G)\leq {\mu}$, where $Y$ has at least $N_{\mu}$ vertices and no twins. Then $G$ contains an induced subgraph $G^{'}=(X^{'},Y^{'})$ with $|X^{'}|=|Y^{'}|\geq A_{\mu}|Y|^{\beta_{\mu}}$ that is isomorphic to either a matching, a co-matching, or a half-graph.
\end{lemma}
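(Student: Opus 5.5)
The plan is to extract, from the twin-freeness of $Y$, a large family of vertices of $X$ that each come with a private ``witness'' in $Y$. I would then reduce to a polynomial Ramsey statement for structures of bounded VC-dimension.

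\emph{Step 1 (a large separating set).} Since $Y$ has no twins, the sets $N_G(y)$, $y\in Y$, are pairwise distinct subsets of $X$. Choose an inclusion-minimal $X_0\subseteq X$ such that the traces $N_G(y)\cap X_0$ are still pairwise distinct.

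The family $\{N_G(y)\cap X_0:y\in Y\}$ has VC-dimension at most $\mu$. By the Sauer--Shelah lemma, $|Y|\le \sum_{i\le\mu}\binom{|X_0|}{i}\le (|X_0|+1)^{\mu}$, so $|X_0|\ge |Y|^{1/\mu}-1$. This is polynomial in $|Y|$ once $|Y|\ge N_\mu$.

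\emph{Step 2 (witness pairs).} By minimality of $X_0$, for each $x\in X_0$ the set $X_0\setminus\{x\}$ no longer separates $Y$. Hence there exist $y_x,y'_x\in Y$ whose traces on $X_0$ differ exactly in $x$; say $x\in N(y_x)$ and $x\notin N(y'_x)$. In particular, for every other $x'\in X_0$, the vertex $x'$ is adjacent to both of $y_x,y'_x$ or to neither.

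\emph{Step 3 (colouring pairs).} For distinct $x,x'\in X_0$, record the type of the pair: whether $x'y_x\in E(G)$ and whether $xy_{x'}\in E(G)$. This is a $4$-colouring of ordered pairs that is symmetric in its two symmetric colours and antisymmetric in the mixed colour. The mixed colour defines a tournament-like relation.

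Each of these relations is definable from adjacency in $G$, so it has VC-dimension bounded by a function of $\mu$. The auxiliary graphs and tournaments therefore have bounded VC-dimension.

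\emph{Step 4 (homogeneous set).} Apply the polynomial Erd\H{o}s--Hajnal theorem for bounded VC-dimension (Fox--Pach--Suk type: such graphs and tournaments contain homogeneous sets, respectively transitive subtournaments, of size $n^{\varepsilon_\mu}$), iterated a bounded number of times. This yields $X'\subseteq X_0$ of size $\ge A_\mu|Y|^{\beta_\mu}$ on which the type is constant. The three possible outcomes translate as follows, with $Y'=\{y_x:x\in X'\}$:
\begin{itemize}
\item If $x'y_x\notin E(G)$ for all distinct $x,x'\in X'$, then $G[X'\cup Y']$ is a matching.
\item If $x'y_x\in E(G)$ for all distinct $x,x'\in X'$, then it is a co-matching.
\item If the relation is mixed and transitive, then after ordering $X'$ it is a half-graph.
\end{itemize}
The witnesses $y_x$ are pairwise distinct, since $y_x$ is the only one of them whose pattern on $X'$ distinguishes $x$. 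Hence $|X'|=|Y'|$.

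The main obstacle is Step 4: obtaining a homogeneous or transitive set of \emph{polynomial} size rather than logarithmic size. Plain Ramsey or Erd\H{o}s--Moser arguments only give $\log n$. I would first try to verify that the derived pair relations inherit VC-dimension $O(\mu)$ and then invoke the Fox--Pach--Suk polynomial bound. If that is delicate for the tournament part, the fallback is a direct argument: use Haussler packing (bounded VC-dimension forces $\varepsilon$-near pairs) and recursive halving to find long chains, which produce the half-graph case.
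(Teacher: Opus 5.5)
The paper gives no proof of this lemma; it is quoted from Hons. So your argument has to stand on its own. Steps 1--3 (the Sauer--Shelah bound $|X_0|\ge |Y|^{1/\mu}-1$ for a minimal separating set, and the witness pairs $y_x,y'_x$) are sound. The problem is Step 4, which carries all the polynomial content and does not hold as you cite it. Fox, Pach and Suk proved only that $n$-vertex graphs of bounded VC-dimension contain a clique or stable set of size $e^{(\log n)^{1-o(1)}}$. That is not of the form $n^{\varepsilon_\mu}$ and would not give $A_\mu|Y|^{\beta_\mu}$. The polynomial bound was their conjecture, later proved by Nguyen, Scott and Seymour (Induced subgraph density VI), and you must invoke that theorem.

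You also need no tournament result, and your Haussler-packing fallback is too vague to count as an argument. Instead, fix a linear order $x_1<\dots<x_n$ of $X_0$ and colour each pair $i<j$ by the two bits $[x_j\in N_G(y_{x_i})]$ and $[x_i\in N_G(y_{x_j})]$. In each of the four colour graphs, the neighbourhood of $x_k$ is a Boolean combination of three sets:
\begin{enumerate}[label=(\alph*)]
\item $N_G(y_{x_k})\cap X_0$, a trace of $\mathcal{N}_G(Y)$;
\item $\{x_j: y_{x_j}\in N_G(x_k)\}$, a pullback of $\mathcal{N}_G(X)$ along $x\mapsto y_x$, which cannot increase VC-dimension;
\item the segment $\{x_j:j>k\}$ and its complement.
\end{enumerate}
Hence each colour graph has VC-dimension $O_\mu(1)$. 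Three applications of the Nguyen--Scott--Seymour theorem then give $X'\subseteq X_0$ with $|X'|\ge |X_0|^{\Omega_\mu(1)}$ on which the colour is constant. The two mixed colours make every $N_G(y_{x_i})\cap X'$ an initial or final segment of the order, i.e.\ a half-graph.

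Second, your co-matching case is wrong as stated. By construction $xy_x\in E(G)$. So if also $x'y_x\in E(G)$ for all distinct $x,x'\in X'$, then $G[X'\cup\{y_x:x\in X'\}]$ is complete bipartite, not a co-matching. Moreover, the $y_x$ need not be distinct: they may all be one vertex complete to $X'$. So your distinctness argument fails in this case too. Use the other witnesses, which you introduced but never used: $y'_x$ agrees with $y_x$ off $x$ and misses $x$, so $N_G(y'_x)\cap X'=X'\setminus\{x\}$. This gives a co-matching with pairwise distinct witnesses.

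With both repairs (matching via $y_x$, co-matching via $y'_x$, half-graph via $y_x$), the witness traces on $X'$ are pairwise distinct in every case, so $|X'|=|Y'|$. Combined with $|X_0|\ge |Y|^{1/\mu}-1$, this yields the stated polynomial bound. The resulting proof is correct, but all of its depth lies in the Nguyen--Scott--Seymour theorem.
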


Here, a \emph{co-matching} is  a bipartite graph with vertex set $\{x_1,\ldots,x_k\}\cup \{y_1,\ldots,y_k\}$ and edge set $\{x_iy_j:  i, j \in [k] \mbox{ with } i\neq j\}$.
A \emph{half-graph} is a graph with the same vertex set and edge set $\{x_iy_j:  i, j \in [k] \mbox{ with } i\leq j\}$.
Note that a co-matching contains an induced $K_{t,t}$ on $\{x_1,\ldots,x_t\}$ and $\{y_{t+1},\ldots,y_{2t}\}$, and a half-graph contains one on $\{x_1,\ldots,x_t\}$ and $\{y_t,\ldots,y_{2t-1}\}$.


\begin{lemma}\label{lem:vc-matching}
For every integer $\mu\geq1$, there exist a positive integer $N_{\mu}$ and two positive constants $A_{\mu}$ and $\beta_{\mu}$ such that the following holds.
If $G$ is an induced $K_{t,t}$-free bipartite graph with $im(G)\leq\mu$, then $\nu(G)\leq\kappa_\mu(t),$
where $t\geq2$ and $\kappa_\mu(t)=(t-1)\bigl(\max
\left\{N_\mu,\left\lceil\left(\frac{ \max\{\mu+1,2t\}}{A_\mu}\right)^{1/\beta_\mu}\right\rceil
\right\}-1\bigr)$.
Moreover, 
$\kappa_\mu(t) =O_\mu\!\left(t^{1+1/\beta_\mu}\right).$
\end{lemma}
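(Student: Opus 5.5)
Take $N_\mu,A_\mu,\beta_\mu$ from Lemma~\ref{thm:hons} and set $m=\max\{N_\mu,\lceil(\max\{\mu+1,2t\}/A_\mu)^{1/\beta_\mu}\rceil\}$, so that $\kappa_\mu(t)=(t-1)(m-1)$. We argue by contradiction: suppose $G$ has a matching $M$ of size $\kappa_\mu(t)+1$, with edges $x_iy_i$, $x_i\in X$, $y_i\in Y$. Let $G_0=G[\{x_i\}\cup\{y_i\}]$. This is an induced subgraph of $G$, so it is bipartite, induced $K_{t,t}$-free, and satisfies $im(G_0)\le\mu$. By Lemma~\ref{lem:vc-from-im}, $\operatorname{VC}(G_0)\le\mu$.

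The first step is to remove twins on the $Y$-side while keeping many matched pairs. Group the vertices $y_i$ into twin classes according to $N_{G_0}(y_i)$. A twin class $Q$ of size at least $t$ yields an induced $K_{t,t}$. Take $t$ vertices of $Q$ and their matching partners $x_i$. Each such $x_i$ is adjacent to its own $y_i\in Q$, so it is adjacent to all of $Q$. The $x$'s are independent and the $y$'s are independent because $G_0$ is bipartite, so together they induce a $K_{t,t}$, a contradiction. Hence every twin class has size at most $t-1$. Since there are $(t-1)(m-1)+1$ vertices $y_i$, pigeonhole gives at least $m$ classes. Pick one representative $y_i$ from each of $m$ classes, let $Y'$ be these representatives, and let $X'$ be their partners. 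Then $G_1=G[X'\cup Y']$ has a perfect matching of size $m$.

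The main obstacle is that twin-freeness of $Y'$ is measured in $G_0$, whereas Lemma~\ref{thm:hons} needs $Y'$ twin-free in the graph we apply it to. To handle this, apply Lemma~\ref{thm:hons} to $G_2=G[X\cup Y']$, keeping all of $X$ from $G_0$ rather than only $X'$. In $G_2$, two vertices of $Y'$ have different neighborhoods in $\{x_i\}$, so $Y'$ has no twins. The graph $G_2$ is an induced subgraph of $G$, hence $\operatorname{VC}(G_2)\le\mu$, and $|Y'|=m\ge N_\mu$.

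Lemma~\ref{thm:hons} then gives an induced matching, co-matching, or half-graph with parts of size $k\ge A_\mu m^{\beta_\mu}\ge\max\{\mu+1,2t\}$. If it is a matching, $G$ has an induced matching of size $\mu+1>im(G)$. If it is a co-matching or half-graph, the remark after Lemma~\ref{thm:hons} gives an induced $K_{t,t}$, since $k\ge 2t$. Either way we reach a contradiction, so $\nu(G)\le\kappa_\mu(t)$.

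For the asymptotics, fix $\mu$. Then $m=O_\mu\bigl(t^{1/\beta_\mu}\bigr)$, because for large $t$ the term $2t$ dominates both $\mu+1$ and $N_\mu$. Therefore $\kappa_\mu(t)\le (t-1)m=O_\mu\bigl(t^{1+1/\beta_\mu}\bigr)$.
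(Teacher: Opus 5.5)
Your proof is correct and is essentially the paper's argument. Twin classes have size at most $t-1$ because of the matching partners, you pick one representative per class, and you keep the whole opposite side of the matching so that the representatives stay pairwise non-twin before applying Lemma~\ref{thm:hons} with target size $\max\{\mu+1,2t\}$. The differences are cosmetic: you form twin classes on the $Y$-side, which matches the literal statement of Lemma~\ref{thm:hons}, whereas the paper uses the $X$-side, and your auxiliary graph $G_1$ is never used.
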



\begin{proof}
Choose $N_\mu,A_\mu,\beta_\mu$ as in Lemma~\ref{thm:hons}.
Suppose, to the contrary, that $G$ has a matching $M$ of size $|M|\geq\kappa_\mu(t)+1$.
 Let $(X,Y)$ be the bipartition of $G$, set $X_M=V(M)\cap X$, $Y_M=V(M)\cap Y$ and
$G_M=G[X_M\cup Y_M]$. Then $G_M$ is $K_{t,t}$-free, and $\operatorname{VC}(G_M)\leq\mu$ by Lemma~\ref{lem:vc-from-im}.

Partition $X_M$ into twin classes according to neighborhoods in $Y_M$.
Each twin class in $X_M$ contains at most $t-1$ vertices. Otherwise, if $x_1,\ldots,x_t$ belong to one twin
class, since  each $x_i\in X_M$ has a neighbor $y_i\in Y_M$, $|N_{Y_M}(x_i)|\geq t$. There is an induced $K_{t,t}$ in $G_M$, a contradiction.
Choose one vertex from each twin class and denote the resulting set by $X'$.
Since
 $|M|\geq\kappa_\mu(t)+1$,
we have $|X'|\geq\max\left\{N_\mu, \left\lceil \left(\frac{\max\{\mu+1,2t\}}{A_\mu}\right)^{1/\beta_\mu}\right\rceil \right\}.$

Let $G'=G[X'\cup Y_M]$.
Then the set $X'$ is pairwise non-twin in $G'$.
Since deleting vertices does not increase  the VC-dimension, we have $\operatorname{VC}(G')\leq\mu$.
By Lemma~\ref{thm:hons},  $G'$ contains an induced matching, a co-matching, or a half-graph,  with at least $\max\{\mu+1,2t\}$ vertices in each part.
If $G'$ contains an induced matching of size $\max\{\mu+1,2t\}$, then we get a contradiction to the assumption that $im(G)\leq\mu$.
If $G'$ contains a co-matching or half-graph with $\max\{\mu+1,2t\}$ vertices in each part, then $G'$ contains an induced $K_{t,t}$, a contradiction to the assumption that $G$ is induced $K_{t,t}$-free. Thus $\nu(G)\leq\kappa_\mu(t)$.
\end{proof}

\begin{lemma}\label{lem:independent-transversal}
Let $m\geq1$ and $t\geq2$ be integers.
Let $G$ be an  induced $K_{t,t}$-free graph and $\mathcal{I}=\{I_1,\ldots,I_m\}$ be a collection of $m$ pairwise disjoint independent sets in $G$ satisfying $|I_i|\geq t^{m-1}$ for every $i\in[m]$.
Then  $\mathcal{I}$ has an independent transversal in $G$.
\end{lemma}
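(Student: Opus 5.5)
We argue by induction on $m$. For $m=1$ there is nothing to prove: $I_1$ has at least $t^0=1$ vertex, and any single vertex of $I_1$ is an independent transversal.

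For the inductive step, let $m\geq 2$ and assume the statement for $m-1$. The idea is to fix one vertex $v\in I_m$ and pass to its non-neighbours in the other classes. For each $i\in[m-1]$ put
\[
I_i^v=I_i\setminus N_G(v).
\]
Suppose some $v\in I_m$ satisfies $|I_i^v|\geq t^{m-2}$ for every $i\in[m-1]$. Then $\{I_1^v,\ldots,I_{m-1}^v\}$ is a family of $m-1$ pairwise disjoint independent sets meeting the hypothesis for $m-1$. By induction it has an independent transversal $S$. Every vertex of $S$ is non-adjacent to $v$, and $v\notin I_i$ for $i<m$ because the classes are disjoint. Hence $S\cup\{v\}$ is an independent transversal of $\mathcal I$.

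It remains to show that such a $v$ exists; this is the main step. Suppose not. Then every $v\in I_m$ has an index $i(v)\in[m-1]$ with
\[
|I_{i(v)}\setminus N_G(v)|<t^{m-2},\qquad\text{so}\qquad |N_G(v)\cap I_{i(v)}|>|I_{i(v)}|-t^{m-2}.
\]
Since $|I_m|\geq t^{m-1}$, pigeonholing over the $m-1$ indices gives a single index $i$ shared by at least $t^{m-1}/(m-1)$ vertices $v$, and this is at least $t$ because $t\geq 2$.

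Take $t$ such vertices $v_1,\ldots,v_t\in I_m$; they are pairwise non-adjacent since $I_m$ is independent. Each $v_j$ misses fewer than $t^{m-2}$ vertices of $I_i$, so their common neighbourhood in $I_i$ has size greater than
\[
|I_i|-t\cdot t^{m-2}\;\geq\; t^{m-1}-t^{m-1}=0.
\]
This is too weak: it guarantees a common neighbour, but we need $t$ of them.

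The fix is to choose the threshold in the inductive step more carefully, or to strengthen the size hypothesis. With sizes $t^{m-1}$ the margin is tight, so I would adjust the counting as follows. Choose $v_1,\ldots,v_t$ one at a time, each time passing to the current common neighbourhood in $I_i$. After $t$ steps the common neighbourhood has size at least $|I_i|-t(t^{m-2}-1)\geq t$. These $t$ common neighbours are independent, since they lie in $I_i$. Together with $v_1,\ldots,v_t$ they induce a $K_{t,t}$, contradicting that $G$ is induced $K_{t,t}$-free.

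If this integer bookkeeping fails, I would instead run the induction with the stronger threshold: find a vertex $v$ with fewer than $t^{m-2}(t-1)$ neighbours lost in each class. I expect the precise constant here to be the only real obstacle. The structural contradiction through an induced $K_{t,t}$ is clear.
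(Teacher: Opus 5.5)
Your argument is correct and is essentially the paper's proof: induction on $m$, passing to $I_i\setminus N_G(v)$ for a suitable $v\in I_m$, and otherwise pigeonholing $t$ vertices of $I_m$ onto one class $I_i$ whose common neighbourhood in $I_i$ has size at least $t^{m-1}-t(t^{m-2}-1)=t$, which gives an induced $K_{t,t}$ (the paper does the pigeonhole contrapositively, as $|I_m|\le (m-1)(t-1)<t^{m-1}$). Your ``fix'' is just integrality, since $|I_i\setminus N_G(v)|<t^{m-2}$ already gives $|I_i\setminus N_G(v)|\le t^{m-2}-1$, so neither the one-at-a-time selection nor the fallback stronger threshold is needed; you should, however, state that your pigeonhole step uses $m-1\le 2^{m-2}\le t^{m-2}$ rather than only ``$t\geq 2$''.
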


\begin{proof}
We prove the lemma by induction on $m$.  The case $m=1$ is immediate.  Suppose that $m\geq2$ and  the lemma holds for $m-1$. If there is a  vertex $v\in I_m$ such that  $|I_i\setminus N_G(v)|\geq t^{m-2}$ for each $i\in[m-1]$, then applying the induction hypothesis to the $m-1$ independent sets $I_i\setminus N_G(v)$, we obtain an independent transversal of size $m-1$. Adding $v$ to this transversal yields an independent transversal of $\mathcal{I}$.

Now suppose that for each vertex $v\in I_m$,  there is an index $i\in[m-1]$ such that $|I_i\setminus N_G(v)|<t^{m-2}.$
We regard $i$ as the color assigned to $v$.
If some color $i\in[m-1]$ is assigned to at least $t$ vertices, let $X\subseteq I_m$ be a set of $t$ vertices of color $i$.
Then for every $v\in X$,  we have $|I_i\setminus N_G(v)|\leq t^{m-2}-1.$
Thus the number of common neighbors of $X$ in $I_i$ is at least
$$|I_i|-\sum_{v\in X}|I_i\setminus N_G(v)|
 \geq t^{m-1}-t(t^{m-2}-1)=t.$$
Since $I_i$ and $I_m$ are independent, $X$  together with its $t$ common neighbors in $I_i$  induce a $K_{t,t}$,
a contradiction. Thus every color class of $I_m$ has size at most $t-1$.
  Since $m-1\leq2^{m-2}\leq t^{m-2}$, we have $$|I_m|\leq(m-1)(t-1)<t^{m-1}.$$  This
contradiction completes the proof.
\end{proof}

We prove Theorem~\ref{thm:mu-main} by adapting the light--heavy tree-decomposition argument from \cite[Theorem~1.1, Claims~3.3--3.7]{ABCMMRW25}.  In our setting, Lemma~\ref{lem:vc-matching} plays the role of the matching-extraction lemma \cite[Lemma~3.1]{ABCMMRW25}.  
The main new idea is the construction of an auxiliary graph
on the heavy vertices of a fixed maximum independent set
that lie in a common bag. By bounding its chromatic number
and applying Lemma~\ref{lem:independent-transversal}, we obtain, for fixed $\mu$, a polynomial
bound in $t$ on the number of these vertices. This replaces
the Ramsey-type argument used in
\cite[Claim~3.5]{ABCMMRW25}.
More precisely, we show that every bag contains at most $\mu\bigl(2\kappa_\mu(t)+1\bigr)$ heavy vertices, which is polynomial in $t$ for each fixed $\mu$.  With this estimate in hand, the remaining steps follow the argument of \cite{ABCMMRW25}, with the corresponding bounds replaced by the estimates established in this section.

\begin{proof}[Proof of Theorem~\ref{thm:mu-main}]
 Recall that $\kappa_\mu(t)$ is defined in Lemma~\ref{lem:vc-matching}.
Choose a tree decomposition $\mathcal T=(T,\beta)$ of $G$ such that $\mu(\mathcal T)\leq \mu$, and let $S$ be a maximum independent set of $G$.

\textbf{Claim 1.}
For each $x\in V(T)$, $\alpha\bigl(G[\beta(x)\setminus S]\bigr)\leq \kappa_\mu(t)$.

The proof is the same as that of \cite[Claim~3.3]{ABCMMRW25}, with $\mathsf{M}(\mu,t)$ replaced by $\kappa_\mu(t)$ and with \cite[Lemma~3.1]{ABCMMRW25} replaced by Lemma~\ref{lem:vc-matching}.
For completeness, we give the details of the proof.

Let $I$ be a maximum independent set of $G[\beta(x)\setminus S]$.
For every $X\subseteq I$, 
$|N_G(X)\cap S|\geq |X|$,
otherwise $(S\setminus N_G(X))\cup X$ would be an independent set
with a greater size than $S$. Let $B=G[S\cup I]$. By Hall's theorem, $B$ has a matching
covering $I$. 
Note that every induced matching of $B$ is an induced matching of
$G$, and each one of its edges meets $I\subseteq\beta(x)$.  
Then
$im(B)\leq\mu_G(\beta(x))\leq\mu$.
By Lemma~\ref{lem:vc-matching}, we have $\nu(B)\leq \kappa_\mu(t)$. Therefore,
$|I|\leq \kappa_\mu(t)$.  
\hfill\qed

Let $C(\mu,t)=t^{\mu}+\mu\kappa_\mu(t)$.
A vertex $v$ of $G$ is called \emph{light} if $\alpha(G[N_G(v)])< C(\mu,t)$,
and \emph{heavy} otherwise. Let $S_\ell$ and $S_h$ denote the sets of light and heavy vertices of $S$, respectively.

\textbf{Claim 2.}
For each $x\in V(T)$, $\alpha\bigl(G[N_G(\beta(x)\cap S_\ell)]\bigr)\leq \mu \cdot(C(\mu,t)-1).$

Let $I_l$ be a maximum independent set of $G[N_G(\beta(x)\cap S_\ell)]$.
We follow the proof of \cite[Claim~3.4]{ABCMMRW25}, and include the details for completeness.
Choose a set $U\subseteq\beta(x)\cap S_\ell$ that is minimal under inclusion among all sets satisfying $I_l\subseteq N_G(U)$.
Then  $|I_l|\leq\sum_{u\in U}|I_l\cap N_G(u)|$ and $U$ is an independent set.

We assert that $|U|\leq\mu$.  Indeed, by the minimality of $U$, for each $u\in U$ there is a vertex $u'\in I_l\cap N_G(u)$ such that $u'\notin N_G(U\setminus{u})$.
Hence, these edges $uu'$ with $u\in U$ form an induced matching.
Since each such edge has an end in $\beta(x)$ and $\mu(\mathcal T)\leq\mu$, we have $|U|\leq\mu$.

Since every $u\in U$ is light and $C(\mu,t)$ is an integer, $|I_l\cap N_G(u)|\leq C(\mu,t)-1$.  Therefore, $|I_l|\leq\sum_{u\in U}|I_l\cap N_G(u)|\leq |U|\bigl(C(\mu,t)-1\bigr)\leq\mu\cdot\bigl(C(\mu,t)-1\bigr)$.
\hfill\qed

\textbf{Claim 3.}
For each $x\in V(T)$,
$|\beta(x)\cap S_h|\leq\mu\cdot(2\kappa_\mu(t)+1)$.

To prove Claim 3, let $x\in V(T)$.  For each $a\in \beta(x)\cap S_h$, $\alpha(G[N_G(a)])\geq C(\mu,t)$, and so we can choose an independent set $I_a\subseteq N_G(a)$ such that $|I_a|=C(\mu,t)$.
For distinct $a,b\in \beta(x)\cap S_h$, call the ordered pair $(a,b)$ \emph{bad} if $|N_G(b)\cap I_a|\geq \kappa_\mu(t)+1.$

We first show
that, for each $a\in \beta(x)\cap S_h$, there are at most $\kappa_\mu(t)$
vertices $b\in (\beta(x)\cap S_h)\setminus\{a\}$ such that $(a,b)$ is bad.  Suppose, to the contrary,
that there exists a set $B=\{b_1,\ldots,b_{\kappa_\mu(t)+1}\}$ of distinct vertices such that $(a,b_i)$ is bad for each $i\in[\kappa_\mu(t)+1]$.
Since every $b_i$ has at least $\kappa_\mu(t)+1$
neighbors in $I_a$, we can choose distinct vertices $u_i\in N_G(b_i)\cap I_a$, $i\in[\kappa_\mu(t)+1]$.
Then the graph $G_a:=G\bigl[I_a\cup B\bigr]$
contains a matching $\{b_iu_i:i\in[\kappa_\mu(t)+1]\}$ of size $\kappa_\mu(t)+1$.
Note that $I_a$ and $B$ are disjoint independent vertex sets. So $G_a$ is an induced $K_{t,t}$-free bipartite graph.
Since every edge of $G_a$ meets $B\subseteq\beta(x)$, we have $im(G_a)\leq\mu_G(\beta(x))\leq\mu$.
By Lemma \ref{lem:vc-matching}, $\nu(G_a)\leq \kappa_\mu(t)$, a contradiction.

Define an auxiliary graph $F$ with vertex set $\beta(x)\cap S_h$ and edge set
$\{ab: a, b\in \beta(x)\cap S_h, (a,b) \mbox{ or } (b,a) \mbox{ is bad}\}$.
For every $W\subseteq\beta(x)\cap S_h$, each edge $ab\in E(F[W])$ corresponds to at least one bad ordered pair, either $(a,b)$ or $(b,a)$.
Since each vertex appears as the first entry of at most $\kappa_\mu(t)$ bad ordered pairs, we have
$|E(F[W])|\leq \kappa_\mu(t)|W|$.
This implies that every nonempty induced subgraph of $F$ has average degree at
most $2\kappa_\mu(t)$, and therefore contains a vertex of degree at most $2\kappa_\mu(t)$.
Thus $F$ is $2\kappa_\mu(t)$-degenerate. By Lemma \ref{lem:Szekeres-Wilf},
$F$ is $(2\kappa_\mu(t)+1)$-colorable.

Suppose, to the contrary, that  $|\beta(x)\cap S_h|>\mu\cdot(2\kappa_\mu(t)+1)$.
Since every color class of $F$ is independent, then $\alpha(F)\geq \left\lceil\frac{|\beta(x)\cap S_h|}{2\kappa_\mu(t)+1}\right\rceil\geq \mu+1$.
Let $I_F=\{a_1,a_2,\ldots,a_{\mu+1}\}$ be an independent set in $F$.
Then no ordered pair of distinct vertices of $I_F$ is bad.  So for any distinct $a_i,a_j\in I_F$,  $|I_{a_i}\cap N_G(a_j)|\leq \kappa_\mu(t)$.
For each $a_i\in I_F$, let $I_{a_i}'=I_{a_i}\setminus \bigcup\limits_{a_j\in I_F\setminus\{{a_i}\}}N_G(a_j).$
Since $|I_F\setminus\{{a_i}\}|=\mu$ and $|I_{a_i}|=C(\mu,t)$, we have
$$|I_{a_i}'|\geq C(\mu,t)-\sum\limits_{a_j\in I_F\setminus\{{a_i}\}}|I_{a_i}\cap N_G(a_j)|\geq C(\mu,t)-\mu \kappa_\mu(t)=t^{\mu}.$$
For each $v\in I_{a_i}'$, since $v\in I_{a_i}\subseteq N_G(a_i)$, while $v\notin N_G(a_j)$ for every $a_j\in I_F\setminus\{a_i\}$, we have $N_G(v)\cap I_F=\{a_i\}$.
This implies that for distinct ${a_i},a_j \in I_F$,  $I_{a_i}'\cap I_{a_j}'=\emptyset$.
By Lemma \ref{lem:independent-transversal},
$\{I_{a_1}', I_{a_2}',\ldots, I_{a_{\mu+1}}^{'}\}$ admit an independent transversal $I':=\{u_{i}:i\in [\mu+1]\}$, where $u_{i}\in I_{a_i}'$.
Since $N_G(u_i)\cap I_F=\{a_i\}$ for every $i\in [\mu+1]$, and $I_F$ and $I'$ are independent sets, we see that $\{a_iu_{i}:i\in [\mu+1]\}$ is an induced matching in $G$ of size $\mu+1$, of which
each edge meets $I_F\subseteq\beta(x)$, contradicting $\mu_G(\beta(x))\leq\mu$.
Claim~3 follows.
\hfill\qed

Recall that $\mathcal{T}=(T,\beta)$ is a tree decomposition of $G$.
For a vertex $v$ of $G$, let $T_v$ be the subgraph of $T$ induced by the vertices that contain $v$ in their bags. Then $T_v$ is a nonempty tree.
We now construct a new tree decomposition $\mathcal{T}'=(T',\beta')$ of $G$ as follows:
\begin{itemize}
    \item The tree $T'$ is obtained from $T$ by adding, for every
    $s\in S_\ell$, a new leaf node $y_s$ adjacent to some node
    $x_s$ of $T_s$.

    \item For every node $x$ of $T$, we set
        $\beta'(x)
        =
        \bigl(\beta(x)\setminus S_\ell\bigr)
        \cup N_G\bigl(\beta(x)\cap S_\ell\bigr).$

    \item For every vertex $s\in S_\ell$, we set $\beta'(y_s)=N_G[s]$.
\end{itemize}

\textbf{Claim 4.}   \cite[Claim 3.6 ]{ABCMMRW25}
$\mathcal{T}'$ is a tree decomposition of $G$.

Next, using the above claims and an argument analogous to that in
\cite[Claim~3.7]{ABCMMRW25}, we can obtain the following bound for  $\mathcal T'$.

\textbf{Claim 5.}
$\alpha(\mathcal{T}')\leq(\mu+1)^2 \kappa_\mu(t)+\mu t^{\mu}$.

Following the proof of \cite[Claim~3.7]{ABCMMRW25} and replacing the corresponding bounds therein with those established in Claims~1-3, we obtain the following estimate.

For each vertex $x\in V(T)$, Claims~1-3 and the fact that  $C(\mu,t)=\mu\kappa_\mu(t)+t^\mu$ imply that
$$\alpha(G[\beta'(x)])\leq \kappa_\mu(t)+\mu\bigl(2\kappa_\mu(t)+1\bigr)+\mu\bigl(C(\mu,t)-1\bigr)\leq(\mu+1)^2\kappa_\mu(t)+\mu t^\mu.$$
 For each new leaf $y_s$ of $T'$, where $s\in S_\ell$, we have $\beta'(y_s)=N_G[s]$ and $\alpha(G[N_G(s)])\leq C(\mu,t)-1$. Hence
$\alpha(G[\beta'(y_s)])\leq C(\mu,t)-1\leq(\mu+1)^2\kappa_\mu(t)+\mu t^\mu$.
Claim 5 follows.
\hfill\qed

By definition of tree-independence number, $tree\text{-}\alpha(G) \leq \alpha(\mathcal T')\leq (\mu+1)^2\kappa_\mu(t)+\mu t^\mu.$
Recall that $\kappa_\mu(t)=O_\mu\!\left(t^{1+1/\beta_\mu}\right)$.
Thus there exist positive constants
$C_{\mu}$ and $c_{\mu}$ such that $tree\text{-}\alpha(G)\le C_{\mu}t^{c_{\mu}}$, and so  $tree\text{-}\alpha(G)=t^{O_\mu(1)}$. This completes the proof.
\end{proof}

\section{Concluding remarks}
\label{s5}

Our results give two polynomial bounds on the
tree-independence number of graphs excluding an induced $K_{t,t}$.
Theorem~\ref{thm:main} gives a polynomial answer to
Problem~\ref{prob:simwidth}: for fixed $t$, the tree-independence number of
an induced $K_{t,t}$-free graph is polynomially bounded in its sim-width.
Theorem~\ref{thm:mu-main} answers Problem~\ref{prob:fixed-mu}: for fixed induced matching treewidth, the tree-independence number is polynomially bounded in $t$.
In particular, Theorem~\ref{thm:main} converts information associated with the cuts of a branch decomposition into a bound on the independence number of the bags of a tree decomposition.
Next, we give the following consequence for unbalanced $K_{a,b}$.

\begin{corollary}\label{cor:unbalanced}
Let $s\geq1$ be an integer, let $a,b\geq2$ be positive integers and $m=\max\{a,b\}$.
If $G$ is an induced $K_{a,b}$-free graph with $\operatorname{simw}(G)\leq s$, then
$tree\text{-}\alpha(G)=O_m\left(\bigl(s+1\bigr)^{2m^2-2m}\right)$.
\end{corollary}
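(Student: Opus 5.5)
The plan is to reduce the corollary directly to Theorem~\ref{thm:main}, applied with $t=m$. The one fact to check is that every induced $K_{a,b}$-free graph is also induced $K_{m,m}$-free.

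So suppose $G$ contains an induced $K_{m,m}$ with parts $X$ and $Y$, where $|X|=|Y|=m$. Since $a,b\le m$, we can choose $X'\subseteq X$ with $|X'|=a$ and $Y'\subseteq Y$ with $|Y'|=b$. The subgraph $G[X'\cup Y']$ is an induced subgraph of $G[X\cup Y]\cong K_{m,m}$. In it, $X'$ and $Y'$ are still independent and every vertex of $X'$ is adjacent to every vertex of $Y'$. Hence $G[X'\cup Y']\cong K_{a,b}$, contradicting the hypothesis. Therefore $G$ is induced $K_{m,m}$-free.

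Since $m=\max\{a,b\}\ge 2$, Theorem~\ref{thm:main} applies with $t=m$ and the given $s\ge 1$. It yields
\[
tree\text{-}\alpha(G)\le c_m(s+1)^{2m^2-2m},
\]
that is, $tree\text{-}\alpha(G)=O_m\bigl((s+1)^{2m^2-2m}\bigr)$, and the constant depends only on $m$.

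There is no genuine obstacle here. The only points to confirm are the monotonicity of induced biclique-freeness under passing to smaller bicliques, which is the observation above, and that the hypothesis $t\ge2$ of Theorem~\ref{thm:main} holds, which follows from $a,b\ge 2$.
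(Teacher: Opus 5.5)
Your proposal is correct and follows essentially the same route as the paper: you observe that every induced $K_{m,m}$ contains an induced $K_{a,b}$, so $G$ is induced $K_{m,m}$-free, and you apply Theorem~\ref{thm:main} with $t=m\geq 2$. Your explicit choice of the subsets $X'$ and $Y'$, and your check that the hypothesis $t\geq 2$ holds, just spell out details the paper leaves implicit.
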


\begin{proof}
Since every induced $K_{m,m}$ contains an induced $K_{a,b}$, every induced $K_{a,b}$-free graph
is also induced $K_{m,m}$-free. By Theorem~\ref{thm:main} with $t=m$, we obtain
$tree\text{-}\alpha(G)=O_m\left(\bigl(s+1\bigr)^{2m^2-2m}\right)$.
\end{proof}

Alon et al. \cite[Lemma~5.1]{AMR25} showed that, for  induced $K_{t,t}$-free graphs $G$, the tree-independence number cannot be bounded by any bivariate polynomial in $tree\text{-}\mu(G)$ and $t$. We observe that the same construction shows that the tree-independence number cannot be bounded by any bivariate polynomial in $simw(G)$ and $t$ either.
In fact, for every positive integer $t$, their construction yields an induced $K_{t,t}$-free graph $G_t$ such that $im(G_t)\leq t-1$ and $tree\text{-}\alpha(G_t)=2^{\Omega(t)}$. Since $simw(G_t)\leq tree\text{-}\mu(G_t)\leq im(G_t)\leq t-1$, for every bivariate polynomial $P$ of total degree $d$, we have $\lvert P(simw(G_t),t)\rvert=O_P(t^d)$. As $tree\text{-}\alpha(G_t)=2^{\Omega(t)}$, we have  $tree\text{-}\alpha(G_t)>P(simw(G_t),t)$ for  sufficiently large $t$.
Hence, the tree-independence number of induced $K_{t,t}$-free graphs cannot be bounded by any bivariate polynomial in $simw(G)$ and $t$.

\vspace{0.3cm}
\noindent
\textbf{Acknowledgments.} 
This work is  supported by the National Natural Science Foundation of China under grant numbers 12571381 and 12371361. Mengyuan Niu is supported by the China Scholarship Council (No. 202507040066) and the Institute for Basic Science (IBS-R029-C4).
ChatGPT 5.4 was used during the exploratory stage of this project. All mathematical arguments and proofs in the manuscript were developed and rigorously verified by the authors.

\end{document}